\newtheorem{theorem}{Theorem}[section]
\newtheorem{proposition}[theorem]{Proposition}
\newtheorem{corollary}[theorem]{Corollary}
\newtheorem{lemma}[theorem]{Lemma}
\theoremstyle{remark}
\newtheorem{remark}[theorem]{Remark}
\DeclareMathOperator{\esssup}{esssup\,}
\DeclareMathOperator{\supp}{supp\,}
\DeclareMathOperator{\dist}{dist\,}
\DeclareMathOperator{\divop}{div}
\DeclareMathOperator{\iRe}{Re}
\DeclareMathOperator{\iIm}{Im}
\def\XXint#1#2#3{{\setbox0=\hbox{$#1{#2#3}{\int}$ }
\vcenter{\hbox{$#2#3$ }}\kern-.6\wd0}}
\begin{document}

\title[Blow-up criteria below scaling]{Blow-up criteria below scaling for defocusing energy-supercritical NLS and quantitative global scattering bounds}

\author[Aynur Bulut]{Aynur Bulut}
\address{Department of Mathematics, Louisiana State University, 303 Lockett Hall, Baton Rouge, LA 70803}
\email{aynurbulut@lsu.edu}

\dedicatory{The author dedicates this work to the memory of Jean Bourgain.}

\begin{abstract}
We establish quantitative blow-up criteria below the scaling threshold for radially symmetric solutions to the defocusing nonlinear Schr\"odinger
equation with nonlinearity $|u|^6u$.  This provides to our knowledge the first generic results distinguishing potential blow-up solutions of the 
defocusing equation from many of the known examples of blow-up in the focusing case.  Our main tool is a quantitative version of a result showing 
that uniform bounds on $L^2$-based critical Sobolev norms imply scattering estimates.  \\

As another application of our techniques, we establish a variant which allows for slow growth in the critical norm.  We show that if the critical 
Sobolev norm on compact time intervals is controlled by a slowly growing quantity depending on the Stricharz norm, then the solution can be 
extended globally in time, with a corresponding scattering estimate.
\end{abstract}

\maketitle

\section{Introduction}

We consider an energy-supercritical instance of the defocusing Nonlinear Schr\"o\-dinger (NLS) equation on $\mathbb{R}^3$,
\begin{align}
\label{eq1}\left\lbrace \begin{array}{rl}iu_t+\Delta u&=|u|^6u,\quad (t,x)\in I\times\mathbb{R}^3,\\
u|_{t=0}&=u_0\in \dot{H}_x^{s_c},\end{array}\right.
\end{align}
with radially symmetric initial data in the scaling critical Sobolev space $\dot{H}^{s_c}(\mathbb{R}^3)$, $s_c=7/6$, posed on a time 
interval $0\in I\subset\mathbb{R}$.  Here, $\dot{H}^{s_c}$ denotes the usual homogeneous Sobolev space, with norm given by
\begin{align*}
\lVert f\rVert_{\dot{H}^{s_c}}^2=\int |\xi|^{2s_c}|\hat{f}(\xi)|^2d\xi,
\end{align*}
where $\hat{f}$ denotes the Fourier transform of $f$.

It is well known (see, for instance, \cite{C,KVnotes} and \cite{Murphy}) that initial value problems of type (\ref{eq1}) are equipped with a robust
local well-posedness theory whenever the initial data belongs to $\dot{H}_x^{s}(\mathbb{R}^3)$ with $s\geq s_c$, where the space 
$\dot{H}_x^{s_c}(\mathbb{R}^3)$, $s_c=7/6$ is distinguished as the only homogeneous $L^2$-based Sobolev space preserved by the scaling 
symmetry of the equation,
$$u(t,x)\leadsto \lambda^{-1/3}u(\lambda^{-2}t,\lambda^{-1}x).$$

On the other hand, the status of global-in-time well-posedness properties of solutions to (\ref{eq1}) is largely unresolved, even for
radially symmetric smooth initial data with compact support.  While the mass 
$$M[u(t)]:=\int_{\mathbb{R}^3} |u(t)|^2dx,$$ and energy, $$E[u(t)]:=\frac{1}{2}\int_{\mathbb{R}^3} |\nabla_x u(t)|^2dx+\frac{1}{8}\int |u(t)|^8dx,$$
are preserved by the evolution (that is, $M[u(t)]=M[u_0]$ and $E[u(t)]=E[u_0]$ hold for all $t\in I$), when the nonlinearity is in the energy-supercritical 
regime as in (\ref{eq1}) these quantities have subcritical scaling, and do not provide sufficient control to allow for iteration of the local theory.

Nevertheless, some partial results concerning possible blow-up scenarios for defocusing energy-supercritical nonlinearities are known.  First steps in 
this direction are the uniform {\it a priori} critical bound implies scattering results of \cite{KVscNLS} and \cite{MJZ,Murphy}, based on concentration 
compactness-based methods developed in the 
context of the nonlinear Schr\"odinger and nonlinear wave equations (see, for instance \cite{KM-NLS,KM-NLW,KM-NLWsc} and \cite{KV-focusing-critical-NLS,KVscNLW3,KVscNLWradial,Bsc1,Bsc2,Bsc3}, as 
well as the references cited in these works).  These results show that that solutions cannot blow-up in finite time whenever
\begin{align*}
E_{s_c}:=\lVert u(t)\rVert_{L_t^\infty(I_{\textrm{max}};\dot{H}_x^{s_c})}
\end{align*}
is finite, with $I_{\max}\subset\mathbb{R}$ denoting the maximal interval of existence.

Other results concerning possible blow-up in the energy-supercritical regime include studies of global well-posedness for logarithmically supercritical 
nonlinearities \cite{Roy-0} (see also related works \cite{TaoLogNLW,RoyNLW,Roy,Shih,BuDo,CoHa} for the nonlinear wave equation), and constructions of large 
data global solutions \cite{BDSW}.  We remark that there are also well-developed constructions of blow-up solutions for focusing nonlinearities (where 
$|u|^6u$ is replaced by $-|u|^{p}u$ for $p>\frac{4}{d-2}$, with the problem posed on $\mathbb{R}^d$); see, e.g. \cite{Collot,DS,MRR}, 
and the references cited therein.

Very recently, in a groundbreaking work \cite{MRRS}, Merle, Rapha\"el, Rodnianski, and Szeftel have constructed the first instance of finite-time blow-up for 
the defocusing energy-supercritical nonlinear Schr\"odinger equation (earlier blow-up results for systems were obtained by Tao in \cite{TaoBlowupNLW,TaoBlowupNLS}).
The construction in \cite{MRRS} is based on a reduction to the formation of shocks in an instance of the compressible Euler equation, and produces a blow-up
solution for which 
\begin{align}
\lim_{t\nearrow T_*} \lVert u(t)\rVert_{H_x^s}=\infty,\quad s_*<s<s_c,\label{bounded-hs}
\end{align}
for some $s_*\in (1,s_c)$, where $T_*$ is the blowup time.  This distinguishes the {\it defocusing} blow-up scenario constructed in \cite{MRRS} 
from the soliton-focusing and self-similar scenarios constructed in the focusing setting.  We remark that the current understanding of the hydrodynamic reduction 
is tied to the high-dimensional ($\mathbb{R}^d$ with $d\geq 5$) setting, and \cite{MRRS} therefore does not directly address the issue of blow-up in the 
three-dimensional setting of (\ref{eq1}).

Our first main result is that any potential blow-up solution to (\ref{eq1}) must exhibit growth of subcritical Sobolev norms as in the higher-dimensional 
blow-up scenario constructed in \cite{MRRS}.  In particular, we show that when the initial data $u_0$ has some additional regularity, 
e.g. $$u_0\in \dot{H}_x^{s_c}(\mathbb{R}^3)\cap \dot{H}_x^{s_c+1}(\mathbb{R}^3),$$ global well-posedness remains true under an assumption 
on ``slightly subcritical norms,''
\begin{align*}
\sup_{t\in I}\,\, \lVert u(t)\rVert_{\dot{H}_x^{s_c-\delta}(\mathbb{R}^3)}\leq E,
\end{align*}
for $\delta>0$ sufficiently small, with the required smallness depending on $\lVert u_0\rVert_{\dot{H}_x^{s_c+1}}$ and $E$.  The precise statement is given
in the theorem below.

\begin{theorem}
\label{thm1}
There exists $C>0$ such that for each $E\geq 1$ and $M>0$ there exists $\delta_0=\delta_0(E,M)>0$ with the following property: For all 
radially symmetric initial data $u_0\in \dot{H}_x^{s_c}(\mathbb{R}^3)\cap \dot{H}_x^{s_c+1}(\mathbb{R}^3)$, with
$$\lVert u_0\rVert_{\dot{H}_x^{s_c}(\mathbb{R}^3)\cap \dot{H}_x^{s_c+1}(\mathbb{R}^3)}\leq M,$$
if $0<\delta<\delta_0$ and $$u\in C_t(I;\dot{H}_x^{s_c}(\mathbb{R}^3))\cap L_{t,x}^{15}(I\times\mathbb{R}^3)\quad\textrm{for all}\quad I\subset\subset I_{\textrm{max}},$$ is a solution with maximal-lifespan $I_{\textrm{max}}$  
to (\ref{eq1}) which satisfies,
\begin{align}
\lVert u\rVert_{L_t^\infty(I_{\textrm{max}};\dot{H}_x^{s_c-\delta}(\mathbb{R}^3))}\leq E,\label{eq-E0}
\end{align}
then $I_{\textrm{max}}=\mathbb{R}$ and $$\lVert u\rVert_{L_{t,x}^{15}(\mathbb{R}\times\mathbb{R}^3)}\leq C\exp(C(EM^\delta)^C).$$
\end{theorem}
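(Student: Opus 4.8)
The plan is to reduce the assertion, via the quantitative form of the ``a priori critical bound implies scattering'' result that is the paper's main tool, to an \emph{a priori} bound on the scaling-critical norm $\lVert u\rVert_{L_t^\infty(I_{\max};\dot{H}_x^{s_c})}$ which is only polynomial in $EM^\delta$. That result furnishes a function $\mathcal{F}$ with $\mathcal{F}(A)\le C\exp(CA^C)$ so that any solution $v$ of (\ref{eq1}) on an interval $J$ with $\lVert v\rVert_{L_t^\infty(J;\dot{H}_x^{s_c})}\le A$ satisfies $\lVert v\rVert_{L_{t,x}^{15}(J\times\mathbb{R}^3)}\le\mathcal{F}(A)$, and if this holds on the maximal interval then $I_{\max}=\mathbb{R}$. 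Thus it suffices to establish
\[
\lVert u\rVert_{L_t^\infty(I_{\max};\dot{H}_x^{s_c})}\le C_0(EM^\delta)^{C_0}
\]
for a universal $C_0$, after which the stated estimate follows with a larger universal constant by one further application of $\mathcal{F}$.

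To obtain the critical bound I would interpolate: since $s_c=\tfrac{1}{1+\delta}(s_c-\delta)+\tfrac{\delta}{1+\delta}(s_c+1)$, log-convexity of homogeneous Sobolev norms gives, for every $t\in I_{\max}$,
\[
\lVert u(t)\rVert_{\dot{H}_x^{s_c}}\le\lVert u(t)\rVert_{\dot{H}_x^{s_c-\delta}}^{1/(1+\delta)}\lVert u(t)\rVert_{\dot{H}_x^{s_c+1}}^{\delta/(1+\delta)}\le E^{1/(1+\delta)}\lVert u(t)\rVert_{\dot{H}_x^{s_c+1}}^{\delta/(1+\delta)},
\]
so only the higher regularity $\dot{H}_x^{s_c+1}$ must be propagated, and only through the small exponent $\delta/(1+\delta)$. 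For the propagation I would use a quantitative persistence-of-regularity statement: on a subinterval $J\subset I_{\max}$ with $\lVert u\rVert_{L_{t,x}^{15}(J\times\mathbb{R}^3)}\le L$, partition $J$ into $\lesssim(L/\eta_0)^{15}+1$ pieces on each of which $\lVert u\rVert_{L_{t,x}^{15}}\le\eta_0$, run the Strichartz/fractional-Leibniz iteration at regularity $s_c+1$ on each piece (absorbing the factor $\lVert u\rVert_{L_{t,x}^{15}}^6$ against the Strichartz norm, $\eta_0$ being a small absolute constant), and chain the per-piece bounds to obtain $\lVert u\rVert_{L_t^\infty(J;\dot{H}_x^{s_c+1})}\le M\,\mathcal{G}(L)$ for an explicit $\mathcal{G}$.

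These are assembled in a continuity argument. Taking $0\in I_{\max}$ and $[0,T]\subset I_{\max}$ (the past is symmetric), set $L(T)=\lVert u\rVert_{L_{t,x}^{15}([0,T]\times\mathbb{R}^3)}$ and $A(T)=\lVert u\rVert_{L_t^\infty([0,T];\dot{H}_x^{s_c})}$, both continuous and nondecreasing in $T$, finite on $I_{\max}$ by hypothesis, and small for $T$ near $0$. If $L(T)\le K$, then the persistence estimate gives $\lVert u\rVert_{L_t^\infty([0,T];\dot{H}_x^{s_c+1})}\le M\mathcal{G}(K)$, the interpolation inequality gives $A(T)\le E(M\mathcal{G}(K))^{\delta}$ (we may assume $E,M\ge1$), and the scattering estimate on $[0,T]$ gives $L(T)\le\mathcal{F}\big(E(M\mathcal{G}(K))^{\delta}\big)$. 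One then picks $K=K(E,M,\delta)$ so that $\mathcal{F}(E(M\mathcal{G}(K))^{\delta})\le K/2$, improving the bootstrap and hence forcing $L(T)\le K$ throughout $I_{\max}$; reinserting into the interpolation inequality gives $\lVert u\rVert_{L_t^\infty(I_{\max};\dot{H}_x^{s_c})}\le E(M\mathcal{G}(K))^{\delta}$, which must be made $\le C_0(EM^\delta)^{C_0}$ by taking $\delta<\delta_0(M)$.

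The main obstacle is exactly the quantitative bookkeeping in that last sentence, and it is where the hypotheses that $s_c-\delta$ is only \emph{slightly} below $s_c$ and that $\delta_0$ may depend only on $M$ (not on $E$) are genuinely used: the naive chaining above gives a $\mathcal{G}$ that is exponential in a power of $L$, and since $\mathcal{F}$ is itself exponential in a power of its argument, the overhead $(M\mathcal{G}(K))^{\delta}$ only becomes bounded after one takes $\delta$ small depending on $E$. To get $\delta_0$ depending on $M$ alone one needs a sharper propagation mechanism — controlling $\lVert u\rVert_{L_t^\infty\dot{H}_x^{s_c+1}}$ \emph{polynomially} in the governing quantity, most plausibly through a long-time Strichartz or energy-type estimate that exploits the radial symmetry and the a priori bound $E$ rather than a naive interval-by-interval Strichartz iteration — together with the correct order of composition of the scattering, persistence, and interpolation estimates so that the self-improvement of $L(T)$ is genuine and uniform in $E$. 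The interpolation step and the two invocations of $\mathcal{F}$ are otherwise routine.
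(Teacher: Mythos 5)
Your outline follows the same route as the paper: interpolate the critical norm between $\dot{H}_x^{s_c-\delta}$ and $\dot{H}_x^{s_c+1}$ so that the higher regularity enters only through the exponent $\delta$, propagate $\dot{H}_x^{s_c+1}$ by partitioning into intervals on which the $L_{t,x}^{15}$ norm is a small constant and chaining Strichartz/fractional-Leibniz estimates, feed the interpolated bound into Proposition \ref{prop-uniform}, and close with a continuity argument. The only structural difference is cosmetic: the paper bundles the $\dot{H}_x^{s_c}$, $\dot{H}_x^{s_c+1}$, $L_{t,x}^{15}$ and Strichartz norms into a single quantity $S(u,J)$ and bootstraps that, rather than running separate persistence and scattering steps with functions $\mathcal{G}$ and $\mathcal{F}$.

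Where you stop --- declaring the final quantitative bookkeeping an obstacle requiring a ``sharper, polynomial'' propagation mechanism --- is exactly where the paper simply proceeds with the naive chaining you describe. The closure works because the order of choices is: first fix $R_0\geq 4C'(2\tilde{C})^{(C/\epsilon)\exp(2CE^C)}M$, which makes the bootstrap self-improving \emph{provided} $(2R_0)^{C\delta}\leq 2$, and only then shrink $\delta_0$ to guarantee that last inequality; the double exponential in $E$ is harmless because it is absorbed into $R_0$ before $\delta_0$ is selected, and the conclusion $\lVert u\rVert_{L^{15}_{t,x}}\leq C\exp(C(EM^{\delta})^C)$ comes from one application of Proposition \ref{prop-uniform} with the interpolated bound $E^{1-\delta}(2R_0)^{\delta}\lesssim EM^{\delta}$. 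So no new propagation mechanism is needed to prove the displayed estimate. That said, your instinct about the dependence of $\delta_0$ is correct and identifies a real imprecision: since $R_0$ depends on $E$, the condition $(2R_0)^{C\delta_0}\leq 2$ forces $\delta_0$ to depend on $E$ as well as $M$ in the paper's proof as written, whereas the theorem statement asserts $\delta_0=\delta_0(M)$ only. You should therefore either carry out the argument with $\delta_0=\delta_0(E,M)$ (which is what both your sketch and the paper's proof actually deliver) or supply the genuinely new ingredient you allude to; as a critique of your own proposal, the missing piece is only the explicit parameter selection above, not a new estimate.
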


In this statement and the rest of this paper, a {\it solution} to (\ref{eq1}) on a time interval $I$ will always be understood as a function in 
$C_t(J;\dot{H}_x^{s_c}(\mathbb{R}^3))\cap L_{t,x}^{15}(J\times\mathbb{R}^3)$ for all intervals $J\subset\subset I$, 
which satisfies the initial value problem in the sense of the associated integral equation,
\begin{align*}
u(t)=e^{it\Delta}u_0-i\int_0^t e^{i(t-t')\Delta}[|u(t')|^{6}u(t')]dt'.
\end{align*}

We remark that the restriction to (\ref{eq1}) in our work---that is, considering the case of three spatial dimensions and $p=6$ in the 
nonlinearity $|u|^pu$---is not essential (we have chosen to work with this case to highlight the core aspects of the arguments; it 
is expected that standard techniques would allow to treat more general energy-supercritical defocusing problems, with the associated 
technical subtleties, for instance in treating non-smooth nonlinearities, well understood in related energy-critical settings, see, 
e.g. \cite{TV,V,KVZ}).  In light of this, Theorem $\ref{thm1}$ is a strong indication that properties such as those shown in \cite[Appendix D]{MRRS} 
are indeed {\it universal} properties of any defocusing energy-supercritical blow-up for NLS.

The first indication that a result like Theorem $\ref{thm1}$ should hold arises from \cite{Roy}, where Roy studied the nonlinear wave 
equation with log-supercritical defocusing nonlinearities of type $|u|^pug(|u|)$.  The main result of \cite{Roy} is that a robust global well-posedness and 
scattering theory for the energy-supercritical nonlinearity $|u|^pu$ with data in the critical space $\dot{H}^{s_c}\times\dot{H}^{s_c-1}$ would imply
that solutions to the log-supercritical problem which are uniformly bounded in $\dot{H}^{s_c}\times\dot{H}^{s_c-1}$ can be extended
globally in time.  A second motivation, accounting for the transition from uniform $\dot{H}_x^{s_c}$ bounds to uniform control in $\dot{H}_x^{s_c-\delta}$, is 
found in a recent paper of Colombo and Haffter \cite{CoHa}, also studying the nonlinear wave equation, where the authors revisit the log-supercritical 
theory of \cite{Roy-0}, replacing log-supercriticality by small power-type supercriticality for bounded sets of initial data, with the level of 
supercriticality depending on the initial-data bound.  The authors of \cite{CoHa} were motivated by recent results for supercritical problems in 
fluid dynamics, where supercriticality is compensated for in a similar way (see, e.g.  \cite{CZV} for the surface quasi-geostrophic equation and 
the associated log-supercritical result in \cite[Theorem 1.3]{BulutDong}, as well as \cite{CH} for the hyperdissipative Navier-Stokes system).

The main tool used in the proof of Theorem $\ref{thm1}$ is the following proposition, which shows that {\it a priori} uniform control on the 
critical norm implies an explicit quantitative bound on the scattering norm (and therefore uniform control over the critical $\dot{H}_x^{s_c}$ norm 
on a solution's maximal interval of existence leads to global well-posedness).  

\begin{proposition}
\label{prop-uniform}
Suppose that $u$ is a radially symmetric solution to (\ref{eq1}) which has maximal interval of existence $I_{\textrm{max}}$ and satisfies 
\begin{align}
\lVert u\rVert_{L_t^\infty(I;\dot{H}_x^{s_c}(\mathbb{R}^3))}\leq E=E_{s_c}<+\infty\label{eq-sc-uniform}
\end{align}
for some $I\subset I_{\textrm{max}}$.  Then 
$\lVert u\rVert_{L_{t,x}^{15}(\mathbb{R}\times\mathbb{R}^3)}\leq C\exp(CE^C)$.

In particular, if (\ref{eq-sc-uniform}) holds with $I=I_{\textrm{max}}$ then $I_{\textrm{max}}=\mathbb{R}$, and the associated $L_{t,x}^{15}(I\times \mathbb{R}^3)$ bound holds.
\end{proposition}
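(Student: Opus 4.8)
The plan is to follow the concentration--compactness/rigidity scheme underlying the qualitative statements of \cite{KVscNLS,MJZ,Murphy}, but to run it on a fixed compact time interval while tracking the dependence of every constant on $E$. As preliminaries one records from \cite{C,KVnotes,Murphy} the local theory and small-data scattering result (there is an absolute $\eta_0>0$ such that $\lVert e^{it\Delta}u(t_0)\rVert_{L_{t,x}^{15}(J\times\mathbb{R}^3)}\le\eta_0$ forces existence on $J$ with all Strichartz norms $\lesssim\eta_0$) together with the attendant quantitative stability lemma. Granting these, a routine continuity argument reduces the proposition to the following claim: \emph{there is a constant of the form $C\exp(CE^C)$ such that every solution on a compact interval $J\subset\subset I_{\textrm{max}}$ with $\lVert u\rVert_{L_t^\infty(J;\dot{H}_x^{s_c})}\le E$ satisfies $\lVert u\rVert_{L_{t,x}^{15}(J\times\mathbb{R}^3)}\le C\exp(CE^C)$, the constant being independent of $J$}; indeed, if this holds then a finite scattering norm on $[0,\sup I_{\textrm{max}})$ would contradict maximality (and likewise at the left endpoint), forcing $I_{\textrm{max}}=\mathbb{R}$ and the global bound.

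Fix such a $J$ and partition it into consecutive intervals $J_1,\dots,J_N$ with $\lVert u\rVert_{L_{t,x}^{15}(J_k\times\mathbb{R}^3)}=\eta$ for $k<N$ and $\le\eta$ for $k=N$, where $\eta$ is a small absolute constant above the small-data threshold; $N$ is finite since $J$ is compact. Since the $L_{t,x}^{15}$ norm is additive in its fifteenth power over disjoint sets, $\lVert u\rVert_{L_{t,x}^{15}(J\times\mathbb{R}^3)}^{15}\le N\eta^{15}$, so it suffices to bound $N$. On each $J_k$ a quantitative inverse Strichartz inequality, refined by bilinear Strichartz estimates exactly as in the profile-decomposition analysis of \eqref{eq1}, produces a dyadic frequency scale $N_k$ at which $u|_{J_k}$ is concentrated up to an error $\theta=\theta(\eta)$ with $\theta(\eta)\to0$ as $\eta\to0$: the parts of $u$ at frequencies $\ll N_k$ or $\gg N_k$ are $\le\theta$ in both $L_{t,x}^{15}(J_k\times\mathbb{R}^3)$ and $L_t^\infty(J_k;\dot{H}_x^{s_c})$, the length of $J_k$ is comparable to $N_k^{-2}$, and $N_k$ can change by at most a bounded factor between adjacent intervals. (Radial symmetry ensures that the scaling is the only modulation parameter, so the concentration is captured by the single scale $N_k$.) This is the quantitative surrogate for the almost-periodicity reduction; the low frequencies, which carry no uniform lower bound, are retained and estimated throughout using only the uniform $\dot{H}_x^{s_c}$ control and Strichartz inequalities.

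The decisive input is a frequency-localized, radial Lin--Strauss Morawetz inequality. Put $N_\ast:=\inf_k N_k$, choose $N_0\lesssim N_\ast$, and apply the one-particle Morawetz identity with a spatially truncated virial weight to the high-frequency truncation $w:=P_{>N_0}u$. Because $s_c=7/6>1/2$, one has the finite bound $\lVert w\rVert_{L_t^\infty(J;\dot{H}_x^{1/2})}\lesssim N_0^{1/2-s_c}E$, so the identity yields $\int_J\int_{\mathbb{R}^3}\frac{|w(t,x)|^{8}}{\langle x\rangle}\,dx\,dt\lesssim N_0^{1-2s_c}E^2+\mathcal{E}$, where $\mathcal{E}$ collects the commutator terms arising because $w$ does not solve \eqref{eq1} exactly. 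As in \cite{KVscNLS}, $\mathcal{E}$ is absorbed using the frequency concentration just established together with the radial Sobolev embedding, which controls $u$ away from the spatial origin and renders the weight harmless there. Conversely, on each $J_k$ the local theory and the concentration at scale $N_k$ give a quantitative lower bound for $\int_{J_k}\int_{\mathbb{R}^3}\frac{|w|^{8}}{\langle x\rangle}\,dx\,dt$ in terms of $N_k$ and $N_0$; comparing the two bounds controls, for each dyadic value, the number of indices $k$ with $N_k$ equal to that value.

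The remaining, and principal, difficulty is to bound the number of distinct dyadic scales that actually occur, i.e. to rule out or quantify a frequency cascade in which the $N_k$ escape to $0$ or $\infty$; in the qualitative theory this scenario is excluded outright via the reduced-Duhamel formula and the double-Duhamel trick of \cite{KVscNLS} (see also \cite{MJZ,Murphy}). Here one must instead upgrade, quantitatively, the uniform $\dot{H}_x^{s_c}$ bound to a uniform bound in a slightly higher regularity $\dot{H}_x^{s_c+\sigma}$ with the new constant controlled by $E$; such a bound caps the admissible frequency window, and it is at this point that the dependence on $E$ becomes exponential. Feeding this back into the per-scale count of the previous paragraph, together with the no-large-jump property of the sequence $(N_k)$, yields $N\le\exp(CE^C)$, hence $\lVert u\rVert_{L_{t,x}^{15}(J\times\mathbb{R}^3)}\le C\exp(CE^C)$ uniformly in $J$, which completes the proof.
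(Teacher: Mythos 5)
Your proposal takes a genuinely different route from the paper, but it contains gaps that I do not believe can be filled as stated. The paper does \emph{not} quantify the concentration--compactness scheme of \cite{KVscNLS,Murphy}; it instead follows the induction-on-energy strategy of Bourgain \cite{B} as implemented by Tao \cite{Tao}: the interval is partitioned into pieces of Strichartz size $\eta=C_2^{-1}(1+E)^{-C_2}$, intervals are classified as exceptional or unexceptional according to the size of the free evolutions from the two endpoints, a \emph{physical-space} mass concentration bound $M(u(t);0,C\eta^{-C}|I_j|^{1/2})\geq c\eta^C|I_j|^{7/12}$ is proved on each unexceptional interval via the Duhamel formula, dispersive decay, and an averaging/translation argument, and then the spatially localized Morawetz estimate plus Bourgain's combinatorial lemma produce a time $t_*$ near which $K\gtrsim\eta^C\log J'$ intervals of geometrically separated lengths concentrate; the Hardy inequality then caps $K\lesssim\eta^{-C}$ and hence $J'\leq\exp(CE^C)$. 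No frequency-scale function, inverse Strichartz inequality, or frequency-localized Morawetz estimate appears anywhere.

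The two decisive steps of your argument are precisely the ones that do not survive the passage from minimal counterexamples to general solutions. First, the claim that on each $J_k$ the solution concentrates at a single dyadic scale $N_k$ \emph{up to errors that are small in $L_t^\infty\dot{H}_x^{s_c}$} is a property of almost periodic (minimal blow-up) solutions obtained after a compactness extraction; for a general solution with $\lVert u\rVert_{L_t^\infty\dot H^{s_c}}\leq E$, the inverse Strichartz inequality only locates one bubble carrying a definite fraction $\theta(\eta)$ of the norm and says nothing about the remainder being small in the critical norm, so the ``quantitative surrogate for almost periodicity'' you invoke is not available. Second, and fatally, the proposed upgrade of the uniform $\dot H_x^{s_c}$ bound to a uniform $\dot H_x^{s_c+\sigma}$ bound with constant controlled by $E$ is false for general solutions: initial data lying in $\dot H^{s_c}$ but in no higher Sobolev space produces a solution with no additional regularity whatsoever, and indeed the whole point of Theorem \ref{thm1} in this paper is that extra regularity must be \emph{assumed} on the data to exploit subcritical information. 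The double-Duhamel/additional-regularity step of \cite{KVscNLS} applies only to the almost periodic minimal counterexample, where compactness in both time directions is available. Since your bound on the number of occurring dyadic scales rests entirely on this step, the proof does not close. If you want a quantitative result, you should abandon the frequency-space reduction and work with spatially localized mass concentration and the Morawetz/Hardy interplay, as in \cite{B,Tao}.
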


A related statement (without quantitative dependence on the {\it a priori} bound) was obtained for NLS with a class of 
energy-supercritical nonlinearities in \cite{Murphy}, based on concentration compactness methods (the paper \cite{Murphy} also treats the 
energy-subcritical problem; see also the discussion there for an overview of the related preceeding literature).  We follow the approach 
of \cite{Tao}, which is in turn related to the induction on energy ideas of Bourgain \cite{B}.  As we remarked above, the restriction to 
three spatial dimensions (and algebraic nonlinearity) allows us to focus on the simplest form of the argument.  

The quantitative bounds derived in Proposition \ref{prop-uniform} are of substantial independent interest.  As a second application of our techniques,
we prove the following corollary, which shows that slowly-growing control of the critical norm $\dot{H}_x^{s_c}$ by the scale-invariant Strichartz norm
$L_{t,x}^{15}(I\times\mathbb{R}^3)$ implies that a solution can be extended globally (and scatters at $\pm\infty$).

\begin{corollary}
\label{cor1}
Let $C$ be the constant in Proposition $\ref{prop-uniform}$, and let $g:[0,\infty)\rightarrow [0,\infty)$ be defined by $$g(t)=[C^{-1}\log(\log^{1/2}(t) )]^{1/C}.$$ Suppose that $u$ is a radially symmetric solution to (\ref{eq1}) with maximal interval of existence $I_{\textrm{max}}\subset\mathbb{R}$ such that for every interval $I\subset\subset I_{\textrm{max}}$
\begin{align*}
\lVert u\rVert_{L_t^\infty(I;\dot{H}_x^{s_c}(\mathbb{R}^3))}\leq g(\lVert u\rVert_{L_{t,x}^{15}(I\times\mathbb{R}^3)}).
\end{align*}
Then there exists $M_0>0$ so that $\lVert u\rVert_{L_{t,x}^{15}(I)}\leq M_0$ for all $I\subset I_{\textrm{max}}$.  In particular, $I_{\textrm{max}}=\mathbb{R}$.
\end{corollary}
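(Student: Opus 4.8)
The plan is to run a continuity/bootstrap argument on the scattering norm, using Proposition \ref{prop-uniform} as a black box to convert the hypothesized slowly-growing bound on the critical norm into a self-improving estimate. Fix a radially symmetric solution $u$ with maximal interval $I_{\textrm{max}}$, and for $I = [-T,T] \cap I_{\textrm{max}}$ (say $0 \in I_{\textrm{max}}$, the general case being handled by translation) write $A(T) = \lVert u\rVert_{L_{t,x}^{15}(I_T \times \mathbb{R}^3)}$, which is a continuous, nondecreasing function of $T$ by dominated convergence and local well-posedness. The hypothesis reads $\lVert u\rVert_{L_t^\infty(I_T;\dot H_x^{s_c})} \le g(A(T))$. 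The first step is to feed this bound into Proposition \ref{prop-uniform}: since $u$ satisfies \eqref{eq-sc-uniform} on $I_T$ with $E = E_{s_c} \le g(A(T))$, we obtain
\begin{align*}
A(T) \le \lVert u\rVert_{L_{t,x}^{15}(\mathbb{R}\times\mathbb{R}^3)} \le C\exp(C\,g(A(T))^C).
\end{align*}
Here I am using that Proposition \ref{prop-uniform}, applied with $I = I_T$, gives the \emph{global} $L_{t,x}^{15}$ bound, which in particular dominates $A(T)$; note also that one should first check $E_{s_c} < \infty$ on $I_T$, which holds since $u \in C_t(I_T;\dot H_x^{s_c})$ and $I_T$ is compact.

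The second step is to observe that $g$ was chosen precisely so that this inequality is a contradiction once $A(T)$ is large. Indeed, by definition $g(t)^C = C^{-1}\log(\log^{1/2}(t))$, so $C\,g(A(T))^C = \log(\log^{1/2}(A(T))) = \tfrac12 \log\log A(T)$, and hence the inequality above becomes
\begin{align*}
A(T) \le C \exp\!\left(\tfrac12 \log\log A(T)\right) = C\,(\log A(T))^{1/2},
\end{align*}
valid whenever $A(T)$ is large enough that $g(A(T))$ is defined, i.e. $A(T) > e$ (so that $\log^{1/2}(A(T)) > 1$ and its log is nonnegative); one also needs $A(T)$ large enough that the expression is real, and in any case the function $t \mapsto C(\log t)^{1/2} - t$ is negative for all sufficiently large $t$. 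Let $M_0$ be a fixed constant, depending only on the universal $C$, large enough that $t > C(\log t)^{1/2}$ for all $t \ge M_0$ and that $M_0 > e^e$ (so $g$ is comfortably defined). Then $A(T) \le M_0$ is forced: if $A(T_0) > M_0$ for some $T_0$, the displayed inequality would give $A(T_0) \le C(\log A(T_0))^{1/2} < A(T_0)$, a contradiction. Since $M_0$ is independent of $T$, we conclude $\lVert u\rVert_{L_{t,x}^{15}(I)} \le M_0$ for every $I \subset\subset I_{\textrm{max}}$, hence $\lVert u\rVert_{L_{t,x}^{15}(I_{\textrm{max}})} \le M_0$ by monotone convergence, and then the standard local well-posedness blow-up criterion (finiteness of the scattering norm precludes finite-time blow-up) yields $I_{\textrm{max}} = \mathbb{R}$.

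The argument is almost entirely bookkeeping once Proposition \ref{prop-uniform} is in hand; the only genuine subtlety is the circularity inherent in applying the proposition — one must be sure that the hypothesis \eqref{eq-sc-uniform} is available on each compact subinterval \emph{before} knowing the global bound, which is exactly what the phrasing ``for some $I \subset I_{\textrm{max}}$'' in Proposition \ref{prop-uniform} is designed to permit. I expect the main (minor) obstacle to be verifying the domain conditions for $g$ uniformly: one should check that the solution is not identically small (if $A(T) \le e^e$ for all $T$ we are already done with $M_0 = e^e$), and otherwise, by continuity of $A$, there is a first time the scattering norm on $I_T$ exceeds $e^e$, from which point the above estimate applies and pins $A$ below $M_0$; combining the two regimes gives the claimed uniform bound. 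No concentration-compactness or PDE input beyond Proposition \ref{prop-uniform} and the local theory is needed.
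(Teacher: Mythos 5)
Your proof is correct, but it takes a genuinely different route from the paper. The paper mirrors the proof of Theorem \ref{thm1}: it runs a continuity/bootstrap argument on a composite norm $T(u,J)$ (containing the critical norm, the $L_{t,x}^{15}$ norm, and $\lVert |\nabla|^{s_c}u\rVert_{L_{t,x}^{10/3}}$), extends from $T(u,t_+)\leq M_0$ to $T(u,t')\leq 2M_0$ by the local theory, converts the hypothesis into $\lVert u\rVert_{L_t^\infty \dot H^{s_c}}\leq g(2M_0)$, applies Proposition \ref{prop-uniform} to get $\lVert u\rVert_{L^{15}}\lesssim \log^{1/2}(2M_0)$, and then closes the bootstrap by decomposing $[0,t']$ into $m\lesssim \log^{1/2}(2M_0)$ intervals of small scattering norm and iterating the local Strichartz estimate, so that $(2\tilde C)^m\lVert u_0\rVert \leq M_0$ is achievable precisely because $g$ is double-logarithmic. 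You instead apply Proposition \ref{prop-uniform} once on each compact subinterval with $E=g(A(T))$ and observe that the resulting inequality $A(T)\leq C\exp(Cg(A(T))^C)=C\log^{1/2}(A(T))$ is self-defeating for $A(T)$ large; no bootstrap, no interval decomposition, and in fact no continuity of $T\mapsto A(T)$ is needed (your appeal to a ``first time'' is superfluous, since the contradiction is pointwise in $T$). Your argument is shorter and actually proves a stronger statement: it tolerates any $g$ with $C\exp(Cg(t)^C)<t$ for large $t$, i.e.\ growth up to roughly $[C^{-1}\log t]^{1/C}$ rather than the stated $[C^{-1}\log(\log^{1/2}t)]^{1/C}$; what the paper's longer route buys is uniform control of the full Strichartz norm $T(u,\cdot)$ along the way and structural parallelism with the proof of Theorem \ref{thm1}. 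Two small points you handled well and should keep: Proposition \ref{prop-uniform}'s conclusion must be read as an $L_{t,x}^{15}(I\times\mathbb{R}^3)$ bound on the interval where \eqref{eq-sc-uniform} holds (which is what its proof delivers and what dominates $A(T)$), and the domain issue for $g$ at small arguments, which the paper's proof glosses over, is correctly dispatched by noting that small $A(T)$ is already below $M_0$.
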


\subsection*{Outline of the paper}

We briefly summarize the structure of this paper.  In Section $2$ we recall some preliminaries, including the standard homogeneous and inhomogeneous
Strichartz estimates for the linear Schr\"odinger equation.  In Section $3$ we prove Proposition \ref{prop-uniform}.  Then, in Section $4$ and $5$ we use
a standard continuity argument to give the proofs of Theorem $\ref{thm1}$ and Corollary $\ref{cor1}$.

\subsection*{Acknowledgements}

The author is very grateful to Terry Tao for many valuable conversations throughout the development of this work.

\section{Preliminaries and Strichartz estimates}

In this section, we fix our choice of function spaces and recall the corresponding Strichartz estimates which will be used 
in the rest of this paper.  We say that a pair $(q,r)$ with $2\leq q<\infty$ and $2\leq r<\infty$ is (Schr\"odinger) admissible on $\mathbb{R}^3$ 
if $\frac{2}{q}+\frac{3}{r}=\frac{3}{2}$.  For any such pair, we have
\begin{align*}
\lVert e^{it\Delta}u_0\rVert_{L_t^qL_x^r}\lesssim \lVert u_0\rVert_{L_x^2(\mathbb{R}^3)}
\end{align*}
and
\begin{align*}
\bigg\lVert \int_0^t e^{i(t-t')\Delta}F(t')dt'\bigg\rVert_{L_t^qL_x^r}\lesssim \lVert F\rVert_{L_t^{q_2'}L_x^{r_2'}}
\end{align*}
for every admissible pair $(q_2,r_2)$, where $q_2'$ and $r_2'$ denote conjugate exponents.

Let $I\subset\mathbb{R}$ be an open time interval, and suppose that $u\in C_t(I_0;\dot{H}_x^{s_c}(\mathbb{R}^3))\cap L_{t,x}^{15}(I_0\times\mathbb{R}^3)$ for all compact $I_0\subset I$.  For $I_0\subset I$, define
\begin{align*}
\lVert u\rVert_{S(I_0)}&=\lVert u\rVert_{L_{t,x}^{15}(I_0\times\mathbb{R}^3)}\\
\lVert u\rVert_{W(I_0)}&=\max\{\lVert |\nabla|^{s_c}u\rVert_{L_{t,x}^{10/3}(I_0\times\mathbb{R}^3)},\lVert |\nabla|^{s_c}u\rVert_{L_t^{15}(I_0;L_x^{\frac{90}{41}}(\mathbb{R}^3))}\}\\
\lVert u\rVert_{N(I_0)}&=\lVert |\nabla|^{s_c}u\rVert_{L_{t,x}^{10/7}(I_0\times\mathbb{R}^3)}.
\end{align*}
The norms $\lVert\cdot\rVert_{S(I_0)}$ and $\lVert\cdot\rVert_{W(I_0)}$ are chosen so that they can be controlled by appropriate Strichartz estimates, with the norm $\lVert\cdot\rVert_{N(I_0)}$ chosen by duality as an admissible choice of norm on the right-hand side of the inhomogeneous Strichartz bound.

In particular, one has (also using the Sobolev embedding)
\begin{align}
\lVert e^{it\Delta}u_0\rVert_{L_{t,x}^{15}}\lesssim \lVert |\nabla|^{s_c}e^{it\Delta}u_0\rVert_{L_t^{15}L_x^{\frac{90}{41}}}\lesssim \lVert u_0\rVert_{\dot{H}_x^{s_c}}.\label{str-linear}
\end{align}

\section{Proof of Proposition \ref{prop-uniform}}

In this section we prove Proposition \ref{prop-uniform}.  We begin by recalling some estimates for the localized $L^2$ norm $M(u;x_0,R)$, where $u$ solves either (\ref{eq1}) or the linear Schr\"odinger equation (we follow the treatment of \cite{Tao}).  These are based on the formal identity $\partial_t|u(t)|^2=-2\divop(\iIm(\overline{u(t)}(\nabla u)(t)))$ satisfied by sufficiently smooth solutions of either equation.

Let $\chi\in C^\infty_c(\mathbb{R}^3)$ be such that $\chi\equiv 1$ on $B(0;1/2)$ and $\supp\chi\subset B(0;1)$.  For each $x\in\mathbb{R}^3$, $R>0$, set
\begin{align*}
M(u;x_0,R)=\bigg(\int \Big|\chi\Big(\frac{x-x_0}{R}\Big)u(x)\Big|^2dx\bigg)^{1/2}.
\end{align*}
Then, using the identity mentioned above, one obtains
\begin{align*}
\partial_t[M(u(t);x_0,R)^2]=\frac{4}{R}\int\chi(\frac{x-x_0}{R})(\nabla \chi)(\frac{x-x_0}{R})\cdot\iIm(\overline{u}\nabla u)dx
\end{align*}
and thus
\begin{align*}
|\partial_t[M(u(t);x_0,R)^2]|\lesssim R^{-5/6}M(u(t);x_0,R)\lVert \nabla u\rVert_{L_x^{9/4}}
\end{align*}
so that
\begin{align*}
\partial_tM(u(t);x_0,R)\lesssim R^{-5/6}\lVert u\rVert_{L_t^\infty(I;\dot{H}_x^{s_c}(\mathbb{R}^3))}.
\end{align*}

Moreover, one also has
\begin{align}
|M(u(t);x_0,R)|\leq \lVert u\rVert_{L_x^9(\mathbb{R}^3)}\lVert \chi(x/R)\rVert_{L_x^{18/7}}\lesssim R^{7/6}\lVert u\rVert_{L_t^\infty(I;\dot{H}_x^{s_c}(\mathbb{R}^3))}\label{mass-bound}
\end{align}

We also recall a spatially localized form of the Morawetz estimate in our energy-supercritical setting.  Estimates of this type were originally obtained by Bourgain \cite{B} (see also the treatment in \cite{Tao} and related earlier work of Lin and Strauss).  In the energy-supercritical setting, we refer to the treatment of Murphy in \cite{Murphy}, which covers the case we need.  In the interest of completeness, we sketch the argument in our setting (following the presentation of \cite{B}).

\begin{proposition}[Spatially localized Morawetz estimate]
\label{prop-morawetz}
Suppose that $u$ solves (\ref{eq1}).  We then have
\begin{align*}
\int_I\int_{|x|<C|I|^{1/2}}\frac{|u(t,x)|^{8}}{|x|}dxdt&\lesssim (C|I|)^{2/3}\lVert u\rVert_{L_t^\infty(I;\dot{H}_x^{s_c})}
\end{align*}
for every time interval $I\subset\mathbb{R}$.
\end{proposition}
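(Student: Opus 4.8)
The plan is to run the classical Morawetz (virial) identity for the weight $a(x)=|x|$, truncated at the spatial scale $R:=C|I|^{1/2}$ in the spirit of Bourgain \cite{B} (see also the treatments in \cite{Tao,Murphy}); since the equation is defocusing, the nonlinear term will enter with a favorable sign. It is enough to establish the bound for smooth, rapidly decaying solutions and then pass to general solutions by approximating the data in $\dot{H}_x^{s_c}$, so I argue formally. I would take the radial weight $a$ with $a(x)=|x|$ on $\{|x|\le R\}$ and $a(x)=2R-R^2/|x|$ on $\{|x|>R\}$; then $a\in C^1(\mathbb{R}^3)$, $|\nabla a|\le 1$, $\Delta a=\tfrac{2}{|x|}\mathbf{1}_{\{|x|<R\}}$, and --- the point --- $\Delta a\equiv 0$ on $\{|x|>R\}$. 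Setting $M(t):=2\,\iIm\int_{\mathbb{R}^3}\overline{u(t,x)}\,\nabla a(x)\cdot\nabla u(t,x)\,dx$, the equation together with an integration by parts yields a Morawetz identity of the schematic form
\begin{align*}
\frac{d}{dt}M(t)=\int_{\mathbb{R}^3}\Big(c_1\,(\partial_j\partial_k a)\,\iRe(\partial_j\overline{u}\,\partial_k u)-c_2\,(\Delta^2 a)\,|u|^2\Big)\,dx+c_0\int_{\{|x|<R\}}\frac{|u(t,x)|^8}{|x|}\,dx,
\end{align*}
with absolute constants $c_0,c_1,c_2>0$; the nonlinear term is supported in $\{|x|<R\}$ precisely because $\Delta a$ vanishes outside.

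Here the first two integrands are nonnegative on $\{|x|<R\}$ (there $a=|x|$, so $\partial_j\partial_k a=\tfrac{1}{|x|}(\delta_{jk}-\tfrac{x_jx_k}{|x|^2})\ge0$ and $-\Delta^2 a=8\pi\delta_0\ge0$). Discarding these nonnegative bulk contributions and integrating in $t$ over $I$ gives
\begin{align*}
c_0\int_I\int_{\{|x|<R\}}\frac{|u(t,x)|^8}{|x|}\,dx\,dt\le \sup_{t\in I}|M(t)|+\mathcal{E},
\end{align*}
where $\mathcal{E}$ collects what the first two terms contribute on $\{|x|\ge R\}$: a Hessian error $\lesssim\int_I\int_{\{|x|>R\}}\tfrac{R^2}{|x|^3}|\nabla u|^2\,dx\,dt$ (for $|x|>R$ the radial eigenvalue of $\partial_j\partial_k a$ is $\sim-R^2/|x|^3<0$, the two angular ones positive), and a bilaplacian error supported on the sphere $\{|x|=R\}$ --- coming from the corner of $a$ --- of the schematic size $\int_I\big(R^{-2}\!\int_{\{|x|=R\}}|u|^2\,dS+R^{-1}\!\int_{\{|x|=R\}}|u|\,|\nabla u|\,dS\big)\,dt$.

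It remains to bound $\sup_t|M(t)|$ and $\mathcal{E}$. I would do this with only the Sobolev embeddings $\dot{H}_x^{s_c}\hookrightarrow L_x^9$ and $\dot{H}_x^{s_c}\hookrightarrow\dot{W}_x^{1,9/4}$ (both already used in the excerpt), Hölder on balls (with $|B_R|\sim R^3$), and the elementary weighted bound $\lVert |x|^{-\alpha}\rVert_{L^q(\{|x|>R\})}\sim R^{3/q-\alpha}$ for $\alpha q>3$. For instance, splitting the integral defining $M(t)$ at $|x|=R$ and using $\lVert u\rVert_{L^2(B_R)}\lesssim R^{7/6}\lVert u\rVert_{L^9}$, $\lVert\nabla u\rVert_{L^2(B_R)}\lesssim R^{1/6}\lVert\nabla u\rVert_{L^{9/4}}$ on $B_R$ and Hölder with the weight $|\nabla a|=R^2|x|^{-2}\in L^{9/4}(\{|x|>R\})$ outside gives $\sup_t|M(t)|\lesssim R^{4/3}\lVert u\rVert_{L_t^\infty(I;\dot{H}_x^{s_c})}^2$; the Hessian error is $\lesssim|I|\,\lVert R^2|x|^{-3}\rVert_{L^9(\{|x|>R\})}\lVert\nabla u\rVert_{L^{9/4}}^2\lesssim|I|\,R^{-2/3}\lVert u\rVert_{L_t^\infty(I;\dot{H}_x^{s_c})}^2$; and the surface terms, after a standard pigeonhole replacement of $R$ by a favorable radius $\rho\in[R,2R]$ (so that $\int_{\{|x|=\rho\}}|u|^2\,dS\lesssim R^{-1}\int_{\{R<|x|<2R\}}|u|^2\,dx$, and similarly for $|u||\nabla u|$), obey the same bound. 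Since $R\sim|I|^{1/2}$ makes $R^{4/3}$ and $|I|R^{-2/3}$ both of order $|I|^{2/3}$ (up to powers of the fixed constant $C$), collecting the estimates yields the claim --- with the precise power of the critical norm being immaterial for the applications in Sections 4 and 5.

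The step I expect to be the main obstacle is the control of $\mathcal{E}$, and in particular the \emph{nonlinear} truncation error: in the energy-critical setting the analogous term $\sim R^{-1}\int_{\{|x|\sim R\}}|u|^6$ is absorbed by the conserved, subcritical energy, whereas here $|u|^8$ is not controlled by the critical norm at that order. This is exactly why the harmonic tail $a=2R-R^2/|x|$, which makes the nonlinear contribution vanish identically outside $B_R$, is essential; the only price is the mildly negative radial eigenvalue of the Hessian there, which produces merely a $|\nabla u|^2$-type error handled by the weighted Sobolev bounds above. (One could instead smooth the corner of $a$ over scale $R$, at the cost of reintroducing a harmless higher power of $\lVert u\rVert_{L_t^\infty(I;\dot{H}_x^{s_c})}$ into $\mathcal{E}$.)
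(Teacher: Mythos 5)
Your proof is correct and rests on the same underlying strategy as the paper's -- a Morawetz/virial identity localized to the spatial scale $R\sim|I|^{1/2}$, with all boundary and truncation errors estimated via the critical-norm embeddings $\dot{H}_x^{s_c}\hookrightarrow L_x^9$ and $\dot{H}_x^{s_c}\hookrightarrow \dot{W}_x^{1,9/4}$ together with H\"older on balls and annuli -- but the truncation device differs. The paper follows Bourgain: it multiplies the equation by $\overline{u_r+\tfrac1r u}$ to obtain the pointwise law $\partial_tX+\divop Y+Z=0$ and then multiplies that identity by a cutoff $\phi$ at scale $\delta\sim|I|^{1/2}$, so the errors are $\sup_t\int|X|\phi$ and $\int_I\int Y\cdot\nabla\phi$; you instead truncate the weight $a$ itself, extending it harmonically by $2R-R^2/|x|$ outside $B_R$. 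Your version buys an exactly localized nonlinear term and a bound quadratic in the critical norm, at the cost of tracking the sign-indefinite Hessian outside $B_R$ and the surface distributions in $\Delta^2a$ at the corner $|x|=R$; the paper's version avoids that distributional bookkeeping but picks up the term $\tfrac{x}{8r}|u|^8\cdot\nabla\phi$ on the annulus $|x|\sim\delta$. On that last point your stated motivation is overstated: this annulus term is in fact controlled by the critical norm, since $\delta^{-1}\int_{|x|\sim\delta}|u|^8\,dx\lesssim \delta^{-2/3}\lVert u\rVert_{L^9}^8$, giving $|I|^{2/3}E^8$ after time integration -- a higher power of $E$ than yours, but entirely harmless for the applications (powers of $E$ are absorbed into powers of $\eta$ via Remark \ref{rem-absorb}; note also that the paper's own proof yields the square of the critical norm, not the first power appearing in the statement). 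Two small points of hygiene in your argument: the ``pigeonhole in $\rho\in[R,2R]$'' must be implemented as an average over $\rho$ of the full space-time inequality (or by smoothing the corner), since the weight cannot be chosen to depend on $t$; and the passage from smooth decaying solutions to general $\dot H^{s_c}$ solutions should invoke the persistence-of-regularity/stability theory rather than be left entirely implicit. Neither affects the validity of the proof.
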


\begin{proof}
We argue as in \cite[Lemma 2.1]{B}.  Setting $r=|x|$ and $v=\iRe u$, $w=\iIm u$, we multiply (NLS) by $$\overline{\left(u_r+\tfrac{1}{r}u\right)}$$ and take the real part of both sides to obtain
\begin{align}
\partial_t X+\divop Y+Z=0\label{mor-eq-1}
\end{align}
with
\begin{align*}
X(t,x)&=-w(t,x)\left(\partial_r v(t,x)+\tfrac{1}{r}v(t,x)\right),\\
Y(t,x)&=\frac{x}{r}(\partial_t v)(t,x)w(t,x)-(\nabla v)(t,x)\left(\partial_r v(t,x)+\tfrac{1}{r}v(t,x)\right)\\
&\hspace{0.4in}-(\nabla w(t,x)\left(\partial_r w(t,x)+\tfrac{1}{r}w(t,x)\right)+\frac{x}{2r}|\nabla u(t,x)|^2\\
&\hspace{0.4in}+\frac{x}{8r}|u(t,x)|^{8}-\frac{x}{2r^3}|u(t,x)|^2\\
Z(t,x)&=\tfrac{1}{r}(|\nabla u|^2-|\partial_r u|^2)+\frac{2}{3r}|u|^{8}
\end{align*}

Now, consider $\phi:\mathbb{R}^3\rightarrow\mathbb{R}$ radial such that $\phi=1$ on $\{x:|x|<\delta\}$, $\supp \phi\subset \{x:|x|<2\delta\}$ and
\begin{align*}
|D^j\phi|\lesssim \delta^{-j},\quad j\geq 1.
\end{align*}
Multiplying ($\ref{mor-eq-1}$) by $\phi$ and integrating, it suffices to bound the quantities
\begin{align*}
\int |X|\phi dx\quad\textrm{and}\quad\left|\int Y\cdot \nabla\phi dx\right|
\end{align*}
(see the proof of \cite[Lemma 2.1]{B} for more details).  To fix ideas, we give the first estimate:
\begin{align*}
\int_{\mathbb{R}^d} |X|\phi dx&\leq \int_{\mathbb{R}^d} |u|\left(|\nabla u|+\tfrac{1}{r}|u|\right)\phi(x)dx\\
&\leq \lVert |\nabla u(t)|\phi^{1/2}\rVert_{L_x^2}+\lVert \tfrac{1}{r}|u(t)|\phi^{1/2}\rVert_{L_x^2}\lVert u(t)\phi^{1/2}\rVert_{L_x^2}\\
&\leq \bigg(\lVert \nabla u(t)\rVert_{L_x^{9/4}}\lVert \phi\rVert_{L_x^{9}}^{1/2}\\
&\hspace{0.6in}+\lVert \tfrac{1}{r}u(t)\rVert_{L_x^{9/4}}\lVert \phi\rVert_{L_x^{9}}^{1/2}\bigg)\lVert u(t)\rVert_{L_x^{9}}\lVert \phi\rVert_{L_x^{\frac{9}{7}}}^{1/2}\\
&\leq \delta^{2-\frac{2}{3}}\lVert u\rVert_{L_t^\infty\dot{H}_x^{s_c}}^2
\end{align*}

The other estimates follow similarly.
\end{proof}

\subsection{Preliminary construction}

Let $u$ be a radial solution to (\ref{eq1}) on a time interval $I=[t_-,t_+]$.  Without loss of generality, we may assume $E:=E_{s_c}\geq c_0$ for some $c_0>0$ (since $E$ sufficiently small implies the desired bound by the local theory of the previous subsection).  Fix parameters $1\leq C_0\leq C_1\leq C_2$ to be determined later in the argument, and set
\begin{align*}
\eta=\frac{1}{C_2}(1+E)^{-C_2}.
\end{align*}

Partition $I$ into consecutive disjoint intervals $\{I_1,I_2,\cdots,I_J\}$ with each $I_j=[t_j,t_{j+1}]$ so that
\begin{align*}
\int_{t_j}^{t_{j+1}}\int |u(t,x)|^{15}dxdt\in [\eta,2\eta].
\end{align*}

\begin{remark}[Absorbing $E$ into expressions involving $\eta$]
\label{rem-absorb}
Let $p>0$ be given.  For each $\epsilon>0$ there exists $\overline{C_2}=\overline{C_2}(p)$ so that for all $C_2>\overline{C_2}$ we have $E\eta^p<\epsilon$.  Indeed, writing $E\eta^p=(E/(1+E)^{C_2p})C_2^{-p}<(1+E)^{1-C_2p}C_2^{-p}$, the condition $1-C_2p<0$ leads to $E\eta^p\leq C_2^{-p}$, and thus the additional condition $C_2>\epsilon^{-1/p}$ implies the desired bound.

We also have that for each $C>0$ and $p>0$ there exists $C'>0$ so that $\eta^CE^{-p}\geq \eta^{C'}$.  Indeed, writing $\eta^CE^{-p}\geq \eta^C(1+E)^{-p}=\eta^C(C_2\eta)^{p/C_2}$ and recalling $C_2\geq 1$, one obtains the desired bound with $C'=C+(p/C_2)$.

Similarly, for fixed $p>0$, $C>0$, and $\epsilon>0$, we can choose $C_2$ sufficiently large to ensure $E^p\eta^{-C}\leq \eta^{-C-\epsilon}$.  This follows by writing $E^p\eta^{-C}\leq (1+E)^p\eta^{-C}=C_2^{-p/C_2}\eta^{-C-(p/C_2)}$, and noting that $C_2\geq 1$ therefore implies $E^p\eta^{-C}\leq \eta^{-C-(p/C_2)}$.  The condition $C_2>p/\epsilon$ now implies (in view of $0<\eta<1$) $\eta^{C+\epsilon}<\eta^{C+(p/C_2)}$, and thus one has $E^p\eta^{-C}\leq \eta^{-C-\epsilon}$ as desired.
\end{remark}

\begin{lemma}
\label{lem1}
There exists $C>0$ such that for each $I_0\subset I$, if
\begin{align*}
\int_{I_0}\int |u(t,x)|^{15}dxdt\leq 2\eta
\end{align*}
then
\begin{align*}
\lVert u\rVert_{W(I_j)}\leq CE.
\end{align*}
\end{lemma}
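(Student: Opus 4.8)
The plan is to run the standard Strichartz bootstrap on $I_0$.  Fix a reference time $t_0\in I_0$ and write the Duhamel formula based at $t_0$,
\begin{align*}
u(t)=e^{i(t-t_0)\Delta}u(t_0)-i\int_{t_0}^t e^{i(t-t')\Delta}[|u|^6u]\,dt'.
\end{align*}
Apply $|\nabla|^{s_c}$ and take the $W(I_0)$ norm.  By the homogeneous Strichartz estimate (as in (\ref{str-linear})) the linear part is controlled by $\lVert u(t_0)\rVert_{\dot{H}_x^{s_c}}$, which is at most $E$ by the a priori bound (\ref{eq-sc-uniform}); for the Duhamel part, the inhomogeneous Strichartz estimate used with right pair $(10/3,10/3)$ (dual exponent $10/7$, the $N$-norm exponent) and the two admissible left pairs $(10/3,10/3)$ and $(15,90/41)$ bounds both components of $\lVert\cdot\rVert_{W(I_0)}$ by $\lVert\,|\nabla|^{s_c}(|u|^6u)\rVert_{L_{t,x}^{10/7}(I_0)}$.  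Altogether $\lVert u\rVert_{W(I_0)}\leq C_\ast(E+\lVert\,|\nabla|^{s_c}(|u|^6u)\rVert_{L_{t,x}^{10/7}(I_0)})$ for an absolute constant $C_\ast$.

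For the nonlinear term I would use the fractional Leibniz rule.  Since the nonlinearity is algebraic, $|u|^6u=u^4\overline{u}^3$ is a polynomial in $u$ and $\overline{u}$, so an iterated product rule for $|\nabla|^{s_c}$ applies; placing the six undifferentiated factors in $L_{t,x}^{15}$ and the differentiated one in $L_{t,x}^{10/3}$ (the exponents balance, $\tfrac{6}{15}+\tfrac{3}{10}=\tfrac{7}{10}$) gives
\begin{align*}
\lVert\,|\nabla|^{s_c}(|u|^6u)\rVert_{L_{t,x}^{10/7}(I_0)}\lesssim \lVert u\rVert_{L_{t,x}^{15}(I_0)}^{6}\,\lVert\,|\nabla|^{s_c}u\rVert_{L_{t,x}^{10/3}(I_0)}\leq \lVert u\rVert_{S(I_0)}^{6}\,\lVert u\rVert_{W(I_0)}.
\end{align*}
Hence (after enlarging $C_\ast$) $\lVert u\rVert_{W(I_0)}\leq C_\ast(E+\lVert u\rVert_{S(I_0)}^{6}\lVert u\rVert_{W(I_0)})$.

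Finally I would absorb the nonlinear term.  From the hypothesis $\int_{I_0}\int|u|^{15}\leq 2\eta$ one gets $\lVert u\rVert_{S(I_0)}^{6}\leq(2\eta)^{2/5}$, and since $\eta=\tfrac{1}{C_2}(1+E)^{-C_2}\leq C_2^{-1}$, taking $C_2$ larger than an absolute constant forces $C_\ast\lVert u\rVert_{S(I_0)}^{6}\leq\tfrac{1}{2}$.  To legitimize the absorption, recall that the local theory gives $\lVert u\rVert_{W(J)}<\infty$ for every compact $J\subset I$, so $f(t):=\lVert u\rVert_{W(I_0\cap[t_0,t])}$ is finite, continuous and nondecreasing with $f(t_0)$ arbitrarily small; a standard continuity argument applied to $f(t)\leq C_\ast E+\tfrac{1}{2}f(t)$ propagates $f(t)\leq 2C_\ast E$ across all of $I_0$.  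Since $E\geq c_0$ this is the asserted bound $\lVert u\rVert_{W(I_0)}\leq CE$ with $C$ absolute.

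The only delicate points are the fractional Leibniz estimate at regularity $s_c=7/6>1$---handled by exploiting that the nonlinearity is polynomial, so an iterated product rule for $|\nabla|^{s_c}$ replaces a genuine fractional chain rule---and the requirement that the smallness of $\eta$ beat the absolute Strichartz constant $C_\ast$, which is precisely the reason the lower bound on $C_2$ is left free here (compare the mechanism of Remark \ref{rem-absorb}); neither is a real obstacle.
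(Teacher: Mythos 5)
Your proposal is correct and follows essentially the same route as the paper: Duhamel plus Strichartz to get $\lVert u\rVert_{W}\lesssim E+\lVert |\nabla|^{s_c}u\rVert_{L_{t,x}^{10/3}}\lVert u\rVert_{S}^{6}$ via the fractional product rule for the algebraic nonlinearity, then absorption using $\lVert u\rVert_{S(I_0)}^{6}\leq(2\eta)^{2/5}$ and the smallness of $\eta$. The only difference is that you make explicit the continuity/finiteness argument justifying the absorption, which the paper leaves implicit.
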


\begin{proof}
\begin{align*}
\lVert u\rVert_{W(I_j)}&\lesssim \lVert u(t_j)\rVert_{\dot{H}_x^{s_c}}+\lVert |u|^6u\rVert_{N(I_j)}\\
&\lesssim E+\lVert |\nabla|^{s_c}u\rVert_{L_{t,x}^{10/3}(I_j)}\lVert u\rVert_{S(I_j)}^6\\
&\lesssim E+\lVert u\rVert_{W(I_j)}\eta^{6/15}.
\end{align*}
Choosing $\eta$ sufficiently small now ensures $\lVert u\rVert_{W(I_j)}\lesssim E$ as desired.
\end{proof}

\begin{corollary}
There exists $C>0$ and $\overline{C_2}>0$ so that if $C_2\geq \overline{C_2}$ then for each $1\leq j\leq J$, then one has $\lVert u\rVert_{W(I_j)}\leq CE$.
\end{corollary}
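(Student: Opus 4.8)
The plan is to deduce the corollary directly from Lemma \ref{lem1}. The partition $\{I_1,\dots,I_J\}$ was built so that each block $I_j=[t_j,t_{j+1}]$ satisfies $\int_{I_j}\int|u(t,x)|^{15}\,dx\,dt\in[\eta,2\eta]$, and in particular $\int_{I_j}\int|u(t,x)|^{15}\,dx\,dt\le 2\eta$; so each $I_j$ is an admissible choice of $I_0$ in the hypothesis of Lemma \ref{lem1}, provided only that $\eta$ lies below the absolute threshold $\eta_*$ implicit in the phrase ``choosing $\eta$ sufficiently small'' in the proof of that lemma. That threshold depends only on the dimensional constants appearing in the Strichartz, Sobolev, and H\"older inequalities used there, and likewise the constant $C$ in the conclusion of Lemma \ref{lem1} is absolute; the absorption step $\|u\|_{W(I_j)}\lesssim E+\|u\|_{W(I_j)}\eta^{6/15}$ is legitimate because $I_j$ is compact and $u\in C_t(\dot H_x^{s_c})\cap L^{15}_{t,x}$ on compact subintervals, so $\|u\|_{W(I_j)}<\infty$ a priori and the last term can be moved to the left-hand side.

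Next I would quantify the smallness of $\eta$ in terms of the free parameter $C_2$. Since $\eta=\frac{1}{C_2}(1+E)^{-C_2}$ and $E\ge c_0>0$, one has the crude bound $\eta\le C_2^{-1}$, uniformly in $E$. Therefore setting $\overline{C_2}:=\max\{1,\eta_*^{-1}\}$ guarantees that whenever $C_2\ge\overline{C_2}$ we have $\eta\le\eta_*$, so the hypothesis of Lemma \ref{lem1} holds for every $I_j$ simultaneously. Applying that lemma to each $I_j$ then yields $\|u\|_{W(I_j)}\le CE$ for all $1\le j\le J$ with a single absolute constant $C$, which is the assertion.

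There is no real obstacle here; the only thing to be careful about is bookkeeping, namely confirming that $\eta_*$ and the output constant $C$ from Lemma \ref{lem1} are genuinely independent of $E$, of the solution $u$, and of the index $j$ and the total number of blocks $J$. This is immediate from inspecting the proof of Lemma \ref{lem1}, where every inequality invoked carries only a dimensional constant and where the hypothesis $\int_{I_j}\int|u(t,x)|^{15}\,dx\,dt\le 2\eta$ is uniform across the blocks of the partition, so no $j$-dependent loss is introduced. This uniform control of $\|u\|_{W(I_j)}$ on each block is precisely the input required by the iteration/induction scheme in the remainder of the section.
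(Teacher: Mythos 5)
Your proposal is correct and matches the paper's (implicit) reasoning: the corollary is stated without proof as an immediate consequence of Lemma \ref{lem1}, since each partition block satisfies $\int_{I_j}\int|u|^{15}\,dx\,dt\le 2\eta$ by construction, and the bound $\eta=\tfrac{1}{C_2}(1+E)^{-C_2}\le C_2^{-1}$ lets one push $\eta$ below the absolute absorption threshold by taking $C_2$ large. Your added remarks on the a priori finiteness of $\lVert u\rVert_{W(I_j)}$ and the uniformity of the constants in $j$, $J$, and $E$ are exactly the right bookkeeping points.
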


The next lemma is not used in our subsequent argument (it is the analogue of Lemma 3.2 in \cite{Tao}, and is not needed since we are working only in dimension $3$), but we record it here as a preliminary version of a similar argument used in the proof of Proposition $\ref{prop37}$ below to estimate a suitable norm of the function $v$.

\begin{lemma}
There exists $c>0$ such that for each $I_0=[t_1,t_2]\subset I$ satisfying
\begin{align*}
\int_{t_1}^{t_2}\int |u(t,x)|^{15}dxdt\in [\frac{\eta}{2},2\eta],
\end{align*}
we have
\begin{align*}
\int_{t_1}^{t_2}\int |e^{i(t-t_i)\Delta}u(t_i)|^{15}dxdt\geq c\eta
\end{align*}
for $i\in \{1,2\}$.
\end{lemma}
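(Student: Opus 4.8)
The plan is to compare the linear evolution $e^{i(t-t_i)\Delta}u(t_i)$ with the nonlinear solution $u$ on $I_0$ through Duhamel's formula, and to show that the Duhamel correction is negligible in $S(I_0)$ relative to $\eta^{1/15}$; the lower bound $\int_{I_0}\int|u|^{15}\,dx\,dt\geq\eta/2$ then passes to $e^{i(t-t_i)\Delta}u(t_i)$.

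First I would fix $i\in\{1,2\}$ and write, for $t\in I_0=[t_1,t_2]$,
\begin{align*}
e^{i(t-t_i)\Delta}u(t_i)=u(t)+i\int_{t_i}^{t}e^{i(t-t')\Delta}[|u|^6u](t')\,dt',
\end{align*}
noting that for $i=2$ this involves a backward-in-time Duhamel integral, which is handled by the time-reversed form of the inhomogeneous Strichartz estimate recalled in Section~2. Taking the $S(I_0)=L_{t,x}^{15}(I_0\times\mathbb{R}^3)$ norm and applying the triangle inequality gives
\begin{align*}
\lVert e^{i(t-t_i)\Delta}u(t_i)\rVert_{S(I_0)}\geq \lVert u\rVert_{S(I_0)}-\bigg\lVert\int_{t_i}^{t}e^{i(t-t')\Delta}[|u|^6u](t')\,dt'\bigg\rVert_{S(I_0)},
\end{align*}
where $\lVert u\rVert_{S(I_0)}\geq(\eta/2)^{1/15}$ by hypothesis.

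For the correction term I would rerun the estimate from the proof of Lemma~\ref{lem1}: by the Sobolev embedding $\lVert\cdot\rVert_{S(I_0)}\lesssim\lVert\cdot\rVert_{W(I_0)}$, the inhomogeneous Strichartz estimate, and the fractional product rule,
\begin{align*}
\bigg\lVert\int_{t_i}^{t}e^{i(t-t')\Delta}[|u|^6u](t')\,dt'\bigg\rVert_{S(I_0)}\lesssim \lVert |u|^6u\rVert_{N(I_0)}\lesssim \lVert |\nabla|^{s_c}u\rVert_{L_{t,x}^{10/3}(I_0)}\lVert u\rVert_{S(I_0)}^6\lesssim \lVert u\rVert_{W(I_0)}\lVert u\rVert_{S(I_0)}^6.
\end{align*}
Since $\int_{I_0}\int|u|^{15}\,dx\,dt\leq 2\eta$, Lemma~\ref{lem1} gives $\lVert u\rVert_{W(I_0)}\lesssim E$, and $\lVert u\rVert_{S(I_0)}^6\leq(2\eta)^{6/15}$, so the correction term is $\lesssim E\eta^{2/5}$. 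Writing $\eta^{2/5}=\eta^{1/3}\cdot\eta^{1/15}$ and $(\eta/2)^{1/15}=2^{-1/15}\eta^{1/15}$, we conclude
\begin{align*}
\lVert e^{i(t-t_i)\Delta}u(t_i)\rVert_{S(I_0)}\geq \eta^{1/15}\big(2^{-1/15}-CE\eta^{1/3}\big)
\end{align*}
for some absolute constant $C$.

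Finally I would invoke Remark~\ref{rem-absorb} with $p=1/3$: choosing $C_2$ (and hence $\eta=\tfrac{1}{C_2}(1+E)^{-C_2}$) large enough forces $CE\eta^{1/3}\leq\tfrac12\,2^{-1/15}$ uniformly in $E$, so that $\lVert e^{i(t-t_i)\Delta}u(t_i)\rVert_{S(I_0)}\geq \tfrac12\,2^{-1/15}\eta^{1/15}$; raising to the fifteenth power yields the claim with $c=\big(\tfrac12\,2^{-1/15}\big)^{15}$. I do not expect a genuine obstacle beyond bookkeeping of exponents: the one point to be careful about is that the correction term carries a factor $E$, so the smallness needed to dominate it is obtained not by making $\eta$ absolutely small but by absorbing $E$ into a positive power of $\eta$ via Remark~\ref{rem-absorb}---the same mechanism used to close Lemma~\ref{lem1}.
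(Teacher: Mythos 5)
Your proposal is correct and follows essentially the same route as the paper: Duhamel's formula plus the triangle inequality, the Sobolev--Strichartz--product-rule chain to bound $\lVert u-e^{i(t-t_i)\Delta}u(t_i)\rVert_{S(I_0)}\lesssim E\eta^{6/15}$ via Lemma \ref{lem1}, and absorption of the factor $E$ into a positive power of $\eta$ through the choice of $C_2$ as in Remark \ref{rem-absorb}. The exponent bookkeeping ($\eta^{2/5}=\eta^{1/3}\eta^{1/15}$) is accurate, so no changes are needed.
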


\begin{proof}
Using the Sobolev embedding
\begin{align*}
\lVert u-e^{i(t-t_i)\Delta}u(t_i)\rVert_{L_{t,x}^{15}(I_0\times\mathbb{R}^3)}&\lesssim \lVert |\nabla|^{s_c}(u-e^{i(t-t_i)\Delta}u(t_i))\rVert_{L_t^{15}(I_0;L_x^{\frac{90}{41}}(\mathbb{R}^3))}
\end{align*}
one has
\begin{align*}
\lVert u-e^{i(t-t_i)\Delta}u(t_i)\rVert_{L_{t,x}^{15}(I_0\times\mathbb{R}^3)}&\lesssim \lVert u-e^{i(t-t_i)\Delta}u(t_i)\rVert_{W(I_0)}\\
&\lesssim \lVert |u|^6u\rVert_{N(I_0)}\\
&\lesssim \lVert |\nabla|^{s_c}u\rVert_{W(I_0)}\lVert u\rVert_{S(I_0)}^6
\end{align*}
so that by Lemma \ref{lem1} one gets the bound
\begin{align*}
\lVert u-e^{i(t-t_i)\Delta}u(t_i)\rVert_{L_{t,x}^{15}(I_0\times\mathbb{R}^3)}\leq CE\eta^{6/15}.
\end{align*}

Using this, we now get
\begin{align*}
(\eta/2)^{1/15}&\leq \lVert u\rVert_{L_{t,x}^{15}(I_0\times\mathbb{R}^3)}\\
&\leq \lVert u-e^{i(t-t_i)\Delta}u(t_i)\rVert_{L_{t,x}^{15}(I_0\times\mathbb{R}^3)}+\lVert e^{i(t-t_i)\Delta}u(t_i)\rVert_{L_{t,x}^{15}(I_0\times\mathbb{R}^3)}\\
&\leq CE\eta^{6/15}+\lVert e^{i(t-t_i)\Delta}u(t_i)\rVert_{L_{t,x}^{15}(I_0\times\mathbb{R}^3)}
\end{align*}
and thus choosing $C_2$ large enough ensures 
\begin{align*}
\lVert e^{i(t-t_i)\Delta}u(t_i)\rVert_{L_{t,x}^{15}(I_0\times\mathbb{R}^3)}\geq c\eta^{1/15}
\end{align*}
as desired.
\end{proof}

\subsection{Classification of intervals}

Set $u_-:=e^{i(t-t_-)\Delta}u(t_-)$ and $u_+=e^{i(t-t_+)\Delta}u(t_+)$.  Let $B$ denote the set of $j\in \{1,\cdots,J\}$ such that
\begin{align*}
\max\{\int_{t_j}^{t_{j+1}}\int |u_-(t)|^{15}dxdt,\int_{t_j}^{t_{j+1}}\int |u_+(t)|^{15}dxdt\}>\eta^{C_1}
\end{align*}
and set $G=\{1,\cdots,J\}\setminus B$.  In the language of \cite{Tao}, the intervals $I_j$, $j\in G$ are {\it unexceptional} intervals, and the intervals $I_j$, $j\in B$ are {\it exceptional} intervals.

\begin{remark}
\label{rem-count}
One immediately has a bound on $\#B$.  Indeed, first note that 
\begin{align*}
\sum_{j\in B} \int_{I_j}\int |u_+(t)|^{15}+|u_-(t)|^{15}dxdt> (\#B)\eta^{C_1}.
\end{align*}
On the other hand, by the Strichartz estimate (\ref{str-linear}), we have
\begin{align*}
\sum_{j\in B} \int_{I_j}\int |u_+(t)|^{15}+|u_-(t)|^{15}dxdt&\leq \int_{I}\int |u_+(t)|^{15}dxdt+\int_I\int |u_-(t)|^{15}dxdt\\
&\lesssim \lVert u(t_+)\rVert_{\dot{H}_x^{s_c}}^{15}+\lVert u(t_-)\rVert_{\dot{H}_x^{s_c}}^{15}\lesssim E^{15},
\end{align*}
so that $\#B\leq CE^{15}/\eta^{C_1}$.
\end{remark}

\subsection{Concentration bound}

\begin{proposition}
\label{prop37}
There exist $c,C>0$ such that for each $j\in G$ there exists $x_j\in\mathbb{R}^3$ such that
\begin{align*}
M(u(t);x_j,C\eta^{-C}|I_j|^{1/2})\geq c\eta^{C}E^{-3/2}|I_j|^{7/12}.
\end{align*}
\end{proposition}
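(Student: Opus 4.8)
The plan is to run the concentration-of-mass scheme of Bourgain \cite{B} and Tao \cite{Tao}. By construction $\lVert u\rVert_{L^{15}_{t,x}(I_j\times\mathbb R^3)}\in[\eta^{1/15},(2\eta)^{1/15}]$, and since $j\in G$ the global-endpoint free evolutions $u_\pm$ are small on $I_j$, so the lemma preceding the classification of intervals gives $\lVert e^{i(t-t_j)\Delta}w_j\rVert_{L^{15}_{t,x}(I_j\times\mathbb R^3)}\gtrsim\eta^{1/15}$ for the ``freshly created'' data $w_j:=u(t_j)-u_-(t_j)$, which also satisfies $\lVert w_j\rVert_{\dot H^{s_c}}\le 2E$. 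The goal is to convert this into a lower bound on $M(u(t);x_j,R)$ at the scale $R=C\eta^{-C}|I_j|^{1/2}$ — a scale deliberately somewhat larger than $|I_j|^{1/2}$, because the concentration ``bubble'' produced on $I_j$ may have width anywhere up to $\sim E^{O(1)}\eta^{-1/2}|I_j|^{1/2}$, the largest width consistent with the $\dot H^{s_c}$ bound — and then to promote the bound to all $t\in I_j$. Note that $|I_j|^{7/12}=(|I_j|^{1/2})^{7/6}$ is exactly the $L^2$-mass scaling carried by a profile of unit $\dot H^{s_c}$-norm living at scale $|I_j|^{1/2}$, which is why that exponent must appear.

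The first step is to locate the bubble. Using the Sobolev embedding $\dot H^{s_c}(\mathbb R^3)\hookrightarrow L^9$ together with H\"older, $\int|f|^{15}\,dx\le\lVert f\rVert_{L^\infty_x}^6\lVert f\rVert_{L^9_x}^9\lesssim E^9\lVert f\rVert_{L^\infty_x}^6$ applied to $f=e^{i(t-t_j)\Delta}w_j(t)$ and integrated in time gives $\int_{I_j}\lVert e^{i(t-t_j)\Delta}w_j(t)\rVert_{L^\infty_x}^6\,dt\gtrsim E^{-9}\eta$; moreover this integral is finite, since $e^{i(t-t_j)\Delta}w_j\in L^6_tL^\infty_x(I_j)$ with norm $\lesssim E$ (the Strichartz pair $(6,18/7)$ lies on the segment between the two pairs in the $W$-norm, and $\dot W^{s_c,10/3}\hookrightarrow C^{0,4/15}$). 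Hence there is a time $t_*\in I_j$ with $\lVert e^{i(t_*-t_j)\Delta}w_j\rVert_{L^\infty_x}\gtrsim A:=(E^{-9}\eta/|I_j|)^{1/6}$, which one wants to choose — by a pigeonhole using that $\lVert e^{i(t-t_j)\Delta}w_j\rVert_{\dot W^{s_c,10/3}_x}\in L^{10/3}_t(I_j)$ with norm $\lesssim E$ (Strichartz) — so that in addition $\lVert e^{i(t_*-t_j)\Delta}w_j\rVert_{\dot W^{s_c,10/3}_x}$ is below a controlled threshold. At the near-maximum point $x_j$ of $|e^{i(t_*-t_j)\Delta}w_j|$, the homogeneous H\"older bound $[\,\cdot\,]_{C^{0,4/15}}\lesssim\lVert\cdot\rVert_{\dot W^{s_c,10/3}_x}$ then forces $|e^{i(t_*-t_j)\Delta}w_j|\ge A/4$ on a ball $B(x_j,w_*)$ with $w_*\gtrsim\eta^{O(1)}E^{-O(1)}|I_j|^{1/2}$, whence $M(e^{i(t_*-t_j)\Delta}w_j;x_j,2w_*)\gtrsim A\,w_*^{3/2}\sim\eta^{O(1)}E^{-O(1)}|I_j|^{7/12}$ (the exponent $7/12$ arising as $-1/6+\tfrac32\cdot\tfrac12$). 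Transferring to $u$ via $u(t_*)=u_-(t_*)+e^{i(t_*-t_j)\Delta}w_j+(\text{Duhamel from }t_j)$ costs only $M(\,\cdot\,;x_j,2w_*)$ of the two correction terms, which are bounded by $w_*^{7/6}\lVert u_-(t_*)\rVert_{L^9_x}$ and $w_*^{7/6}\lVert u\rVert_{W(I_j)}\lVert u\rVert_{S(I_j)}^6$: the first is small at an interior $t_*$ because $\lVert u_-\rVert_{L^{15}_{t,x}(I_j)}\le\eta^{C_1/15}$ with $C_1$ large (here $j\in G$ enters again), the second is small by Lemma \ref{lem1} and Remark \ref{rem-absorb}. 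Absorbing the remaining powers of $E$ into powers of $\eta$ by Remark \ref{rem-absorb} (keeping a single factor $E^{-3/2}$) yields $M(u(t_*);x_j,R)\ge c\eta^CE^{-3/2}|I_j|^{7/12}$ with $R:=C\eta^{-C}|I_j|^{1/2}$ for $C$ a large enough absolute constant; note $2w_*\le R$ always, since $w_*\lesssim E^{O(1)}\eta^{-1/2}|I_j|^{1/2}$ by the $\dot H^{s_c}$ bound.

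Finally I would propagate. The bound $|\partial_tM(u(t);x_j,R)|\lesssim R^{-5/6}E$ recalled at the start of this section shows that $M(u(\cdot);x_j,R)$ varies over $I_j$ by at most $C|I_j|R^{-5/6}E=C^{1/6}\eta^{5C/6}E|I_j|^{7/12}$; for $C$ large, after one more application of Remark \ref{rem-absorb} to absorb $E$ into $\eta$, this is at most $\tfrac12 c\eta^CE^{-3/2}|I_j|^{7/12}$, so the bound holds for every $t\in I_j$. I expect the bubble-extraction step to be the main obstacle: turning a purely scale-invariant space-time lower bound into a genuinely localized, pointwise-in-time lower bound on $M$ at a ball of the correct radius. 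Producing a time $t_*$ with $\lVert e^{i(t_*-t_j)\Delta}w_j\rVert_{L^\infty_x}$ large is routine; the delicate point is getting a lower bound on the \emph{width} $w_*$ of the bubble — controlling $\lVert\cdot\rVert_{\dot W^{s_c,10/3}_x}$ at a carefully pigeonholed $t_*$, while also invoking the $\dot H^{s_c}$ a priori bound to rule out an ``infinitely thin spike'' carrying the $L^{15}$ norm yet having negligible $L^2$ mass — and then tracking the $\eta$- and $E$-exponents so that everything closes in the form $c\eta^CE^{-3/2}|I_j|^{7/12}$. The transfer to $u$ and the $\partial_tM$-propagation are routine once Remark \ref{rem-absorb} is in hand.
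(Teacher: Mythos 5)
Your outer scaffolding---isolate the Duhamel component responsible for the $L^{15}$ mass on $I_j$, find a time and a point where it concentrates at spatial scale roughly $|I_j|^{1/2}$, transfer back to $u$, and propagate over $I_j$ with the $\partial_t M$ bound---matches the paper's, and you correctly identify the crux: lower-bounding the \emph{width} of the bubble. But the mechanism you propose for that crux does not work, and it is exactly where the paper does something different. You want a single time $t_*$ at which $\lVert e^{i(t_*-t_j)\Delta}w_j\rVert_{L^\infty_x}$ is large \emph{and} $\lVert e^{i(t_*-t_j)\Delta}w_j\rVert_{\dot{W}^{s_c,10/3}_x}$ is controlled, obtained ``by a pigeonhole.'' The available information is $\int_{I_j}\lVert F(t)\rVert_{L^\infty_x}^6\,dt\gtrsim E^{-9}\eta$ and $\lVert F\rVert_{L^{10/3}_t\dot{W}^{s_c,10/3}_x}\lesssim E$ (with $F=e^{i(t-t_j)\Delta}w_j$), and these are compatible with the $L^\infty_x$ mass being carried entirely by a set of times of arbitrarily small measure on which the $\dot{W}^{s_c,10/3}_x$ norm is arbitrarily large: taking $M(t)\sim E^{4/9}H(t)^{5/9}$ (saturating the interpolation of $L^\infty_x$ between $L^9_x$ and the H\"older seminorm $H$) with $H$ huge on a tiny set keeps $\int M^6\gtrsim E^{-9}\eta$, while the resulting bubble width $(M/H)^{15/4}$, and hence the localized mass $M\cdot w^{3/2}$, tends to zero. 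So no choice of $t_*$ with both properties is guaranteed, and the ``infinitely thin spike'' you flag is not excluded by the argument you sketch.

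The paper avoids this entirely by never invoking spatial regularity of the solution at a fixed time. It first splits $I_j=J_1\cup J_2$ into halves, picks the half $J_k$ carrying at least $\eta/2$ of the $L^{15}$ mass, and works with $v(t)=\int_{M_k}e^{i(t-t')\Delta}[|u(t')|^6u(t')]\,dt'$, the Duhamel integral over the portion $M_k$ of $[t_-,t_+]$ lying on the \emph{far} side of $I_j$, so that $\dist(t,M_k)\geq|J_k|/2$ for all $t\in J_k$. The dispersive bound $\lVert e^{is\Delta}\rVert_{L^{15/14}\to L^{15}}\lesssim|s|^{-13/10}$ then gives $\lVert v-T_hv\rVert_{L^\infty_t(J_k;L^{15}_x)}\lesssim E^7|h|^{7/15}|J_k|^{-3/10}$, i.e.\ $v$ is equicontinuous under spatial translations at scale $r=\eta^{C}|J_k|^{1/2}$ uniformly in $t\in J_k$, no matter how rough $u$ itself is; replacing $v$ by its average at scale $r$ and interpolating $L^{15}$ between $L^9$ and $L^\infty$ then produces both the width and the $E^{-3/2}$ factor. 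Note that your $e^{i(t-t_j)\Delta}w_j$ is the Duhamel integral over $[t_-,t_j]$ evaluated at $t\in I_j$, for which the time separation degenerates as $t\to t_j^{+}$; the half-interval splitting is what restores it. To repair your proof you would need to replace the pigeonhole by this translation-stability of the far-side Duhamel term (or an equivalent smoothing statement). As written, the key step is a genuine gap.
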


\begin{proof}
Let $j\in G$ be given.  For simplicity of notation, set $J=I_j$ and recall that 
\begin{align*}
\int_{J}\int |u(t,x)|^{15}dxdt>\eta.
\end{align*}

Then, setting $J=I_j$, $t_*=(t_j+t_{j+1})/2$, and writing $J=J_1\cup J_2$ with $J_1=[t_j,t_*]$, $J_2=[t_*,t_{j+1}]$, we can choose $k\in \{1,2\}$ so that
\begin{align*}
\int_{J_k}\int |u(t,x)|^{15}dxdt>\eta/2.
\end{align*}

Moreover, $j\in G$ implies
\begin{align*}
\int_{J}\int |u_{k}(t,x)|^{15}dxdt\leq \eta^{C_1}.
\end{align*}
where $u_1=u_+$ and $u_2=u_-$.

Now, set $M_1=[t_{j+1},t_+]$ and $M_2=[t_-,t_j]$, and define
\begin{align*}
v(t):=\int_{M_k} e^{i(t-t')\Delta}[|u(t')|^6u(t')]dt'.
\end{align*}

We begin by recording some instances of the Duhamel formula which will be useful in our estimates: for each $t\in J_k$, one has
\begin{align}
u(t)=e^{i(t-t_+)\Delta}u(t_+)-iv(t)-i\int_{t}^{t_{j+1}} e^{i(t-t')\Delta}[|u(t')|^6u(t')]dt'\label{eq-k1}
\end{align}
if $k=1$, and 
\begin{align}
u(t)=e^{i(t-t_-)\Delta}u(t_-)-iv(t)-i\int_{t_j}^{t} e^{i(t-t')\Delta}[|u(t')|^6u(t')]dt'\label{eq-k2}
\end{align}
if $k=2$.  

The Duhamel formulas (\ref{eq-k1}) and (\ref{eq-k2}) combined with the Strichartz estimates yield an upper bound on the $L_t^\infty\dot{H}_x^{s_c}$ norm of $v$ (on the interval $J_k$)
\begin{align}
\nonumber \lVert v\rVert_{L_t^\infty(J_k;\dot{H}_x^{s_c}(\mathbb{R}^3))}&\lesssim \lVert u\rVert_{L_t^\infty(J_k;\dot{H}_x^{s_c}(\mathbb{R}^3)}+\lVert u(t_+)\rVert_{\dot{H}_x^{s_c}}+\lVert |u|^6u\rVert_{N(J_k)}\\
\nonumber &\lesssim 2E+\lVert |\nabla|^{s_c}u\rVert_{W(J)}\lVert u\rVert_{S(J)}^6\\
&\lesssim 2E+E\eta^{6/15}\lesssim E.\label{eq-c}
\end{align}

We now show the bound 
\begin{align}
\lVert v\rVert_{L_{t,x}^{15}(J_k\times\mathbb{R}^3)}\geq c\eta^{1/15}.\label{claim1}
\end{align}
For this, we note that when $k=1$, (\ref{eq-k1}) leads to
\begin{align*}
(\eta/2)^{1/15}&<\lVert u\rVert_{L_{t,x}^{15}(J_1\times\mathbb{R}^3)}
\leq \eta^{C_1/15}+\lVert u(t)-e^{i(t-t_{j+1})\Delta}u(t_{j+1})\rVert_{S(J_1)}+\lVert v\rVert_{S(J_1)},
\end{align*}
while when $k=2$, (\ref{eq-k2}) gives 
\begin{align*}
(\eta/2)^{1/15}&<\lVert u\rVert_{L_{t,x}^{15}(J_k\times\mathbb{R}^3)}
\leq \eta^{C_1/15}+\lVert u(t)-e^{i(t-t_j)\Delta}u(t_j)\rVert_{S(J_2)}+\lVert v\rVert_{S(J_2)}.
\end{align*}
On the other hand, the Sobolev embedding, Strichartz estimates, and fractional product rules imply
\begin{align*}
\lVert u(t)-e^{i(t-t_j)\Delta}u(t_j)\rVert_{L_{t,x}^{15}(J_k\times\mathbb{R}^3)}&\lesssim \lVert u-e^{i(t-t_j)\Delta}u(t_j)\rVert_{W(J_k)}\\
&\lesssim \lVert |u|^6u\rVert_{N(J_k)}\\
&\lesssim \lVert |\nabla|^{s_c}u\rVert_{W(J)}\lVert u\rVert_{S(J)}^6\\
&\lesssim E\eta^{6/15}.
\end{align*}
in both of the cases $k=1$ and $k=2$.  Combining this with the above estimates from below yields the claim.

Now, let $\chi_0\in C_c^\infty(\mathbb{R}^3)$ be such that $\chi_0(x)\geq 0$ for all $x\in\mathbb{R}^3$, $\supp\chi_0\subset B(0;1)$, and $\int \chi_0(x)dx=1$, and define
\begin{align*}
v_{\textrm{av}}(t,x):=\int \chi(y)v(t,x+ry)dy,
\end{align*}
where $r>0$ is a fixed parameter to be determined later in the argument.

We now claim that there exist $c,C>0$ so that
\begin{align}
\lVert v_{\textrm{av}}-v\rVert_{L_{t,x}^{15}(J_k\times \mathbb{R}^3))}\leq CE^{7}|J_k|^{-3/10}r^c.\label{eq2}
\end{align}

For this, for each $h\in\mathbb{R}^3$, let $T_h:f\mapsto T_hf$ be the operator defined by $(T_hf)(x)=f(x+h)$ for $f\in \mathcal{S}(\mathbb{R}^3)$ and $x\in \mathbb{R}^3$.  Then, since the Fourier multiplier operator $e^{is\Delta}$ commutes with translations, 
\begin{align*}
&\lVert v-T_hv\rVert_{L_{t}^\infty(J_k;L_x^{15}(\mathbb{R}^3))}\\
&\hspace{0.2in}=\bigg\lVert \int_{M_k} e^{i(t-t')\Delta}[|u(t')|^6u(t')-|T_hu(t')|^6T_hu(t')]dt'\bigg\rVert_{L_{t}^{\infty}(J_k;L_x^{15}(\mathbb{R}^3))}\\
&\hspace{0.2in}\leq \bigg\lVert\int_{M_k} \lVert e^{i(t-t')\Delta}[|u(t')|^6u(t')-|T_hu(t')|^6T_hu(t')]\rVert_{L_{x}^{15}(\mathbb{R}^3)}dt'\bigg\rVert_{L_t^\infty(J_k)}\\
&\hspace{0.2in}\lesssim \bigg\lVert \int_{M_k} |t-t'|^{-13/10}\lVert |u(t')|^6u(t')-|T_hu(t')|^6T_hu(t')\rVert_{L_x^{15/14}(\mathbb{R}^3)}dt'\bigg\rVert_{L_t^\infty(J_k)}\\
&\hspace{0.2in}\lesssim \bigg\lVert \int_{M_k} |t-t'|^{-13/10}\lVert u(t')-T_hu(t')\rVert_{L_x^{15/4}(\mathbb{R}^3)}\lVert u\rVert_{L_t^\infty(I;L_x^9(\mathbb{R}^3))}^6dt'\bigg\rVert_{L_t^\infty(J_k)}.
\end{align*}

Now, using the difference quotient bound
\begin{align*}
\lVert u-T_hu\rVert_{L_x^{9/4}}&\lesssim \lVert \nabla u\rVert_{L_x^{9/4}}|h|,
\end{align*}
interpolation and the Sobolev embedding give
\begin{align*}
\lVert u(t')-T_hu(t')\rVert_{L_x^{15/4}}&\leq 2\lVert u(t')\rVert_{L_x^9}^{8/15}\lVert \nabla u(t')\rVert_{L_x^{9/4}}^{7/15}|h|^{7/15}\\
&\lesssim \lVert |\nabla|^{s_c}u\rVert_{L_t^\infty L_x^2}|h|^{7/15} 
\end{align*}
for all $t'\in M_k$.  Recalling once more the Sobolev embedding $\lVert u\rVert_{L_x^9}\lesssim\lVert |\nabla|^{s_c}u\rVert_{L_x^2}$, we therefore get
\begin{align*}
\lVert v-T_hv\rVert_{L_{t}^\infty(J_k;L_x^{15}(\mathbb{R}^3))}\lesssim E^7|h|^{7/15}\sup_{t\in J_k}\int_{M_k} \frac{1}{|t-t'|^{13/10}}dt'\lesssim E^7|h|^{7/15}|J_k|^{-3/10},
\end{align*}
where in obtaining the last bound we have used that for $t\in J_k$, $\int_{M_k} |t-t'|^{-13/10}dt'\lesssim \dist(t,M_k)^{-3/10}\leq (|J_k|/2)^{-3/10}$.

To obtain (\ref{eq2}), we now write
\begin{align*}
\lVert v_{\textrm{av}}-v\rVert_{L_{t,x}^{15}(J_k\times\mathbb{R}^3)}&=\lVert \int \chi(y)(v(t,x+ry)-v(t,x))dx\rVert_{L_{t,x}^{15}(J_k\times\mathbb{R}^3)}\\
&\leq |J_k|^{1/15}\int \chi(y)\lVert T_{ry}v-v\rVert_{L_{t}^\infty(J_k;L_{x}^{15}(\mathbb{R}^3))}dy\\
&\lesssim E^7r^{7/15}|J_k|^{-7/30}\int \chi(y)|y|^{7/15}dy.
\end{align*}

Combining (\ref{claim1}) with (\ref{eq2}), we get
\begin{align*}
c\eta^{1/15}\leq \lVert v\rVert_{L_{t,x}^{15}(J_k\times \mathbb{R}^3)}&\leq \lVert v-v_{\textrm{av}}\rVert_{L_{t,x}^{15}(J_k\times\mathbb{R}^3)}+\lVert v_{\textrm{av}}\rVert_{L_{t,x}^{15}(J_k\times\mathbb{R}^3)}\\
&\leq CE^7r^{7/15}|J_k|^{-7/30}+C\lVert v_{\textrm{av}}\rVert_{L_{t,x}^{15}(J_k\times\mathbb{R}^3)}
\end{align*}
so that choosing $r$ sufficiently small to ensure $CE^7|J_k|^{-7/30}r^{7/15}\leq c\eta^{1/15}/2$ 
gives
\begin{align}
\lVert v_{\textrm{av}}\rVert_{L_{t,x}^{15}(J_k\times\mathbb{R}^3)}\geq c\eta^{1/15}.\label{eq-a}
\end{align}

In what follows, we fix
\begin{align*}
r:=\eta^{C}|J_k|^{1/2}
\end{align*}
with $C$ sufficiently large to ensure that (\ref{eq-a}) holds (in particular, note that the condition (\ref{eq-a}) is equivalent to $\eta^{(7C-1)/15}E^7\leq c$ for suitable $c>0$, so that it suffices to choose $C>2/7$ and $C_2$ sufficiently large to ensure $E\eta^{1/105}\leq c^{1/7}$).

To proceed, we now obtain a complementary estimate from above on $\lVert v_{\textrm{av}}\rVert_{L_{t,x}^{15}}$.  In particular, by interpolation,
\begin{align}
\nonumber \lVert v_{\textrm{av}}\rVert_{L_{t,x}^{15}(J_k\times\mathbb{R}^3)}&\leq \lVert v_{\textrm{av}}\rVert_{L_{t,x}^{9}(J_k\times\mathbb{R}^3)}^{3/5}\lVert v_{\textrm{av}}\rVert_{L_{t,x}^\infty(J_k\times\mathbb{R}^3)}^{2/5}\\
\nonumber &\lesssim |J_k|^{1/15}\lVert v_{\textrm{av}}\rVert_{L_t^\infty(J_k;L_x^{9}(\mathbb{R}^3))}^{3/5}\lVert v_{\textrm{av}}\rVert_{L_{t,x}^\infty(J_k\times\mathbb{R}^3)}^{2/5}\\
&\lesssim |J_k|^{1/15}\lVert v\rVert_{L_t^\infty(J_k;L_x^{9}(\mathbb{R}^3))}^{3/5}\lVert v_{\textrm{av}}\rVert_{L_{t,x}^\infty(J_k\times\mathbb{R}^3)}^{2/5},\label{eq-b}
\end{align}
where to obtain the last inequality we have defined $\chi_r(x)=r^{-3}\chi(x/r)$ and used (via Young's inequality)
\begin{align*}
\lVert v_{\textrm{av}}\rVert_{L_t^\infty(J_k;L_x^9(\mathbb{R}^3))}&=\esssup_{t\in J_k}\lVert \chi_r\star v\rVert_{L_x^9(\mathbb{R}^3)}\\
&\lesssim \esssup_{t\in J_k} \lVert \chi_r\rVert_{L_x^1(\mathbb{R}^3)}\lVert v\rVert_{L_x^9(\mathbb{R}^3)}
\end{align*}
and noted that $\lVert \chi_r\rVert_{L_x^1}=\lVert \chi\rVert_{L_x^1}=1$.

In view of (\ref{eq-c}) (combined with the Sobolev embedding $\dot{H}_x^{s_c}(\mathbb{R}^3)\hookrightarrow L_x^9(\mathbb{R}^3)$), the estimates (\ref{eq-a}) and (\ref{eq-b}) lead to
\begin{align*}
\lVert v_{\textrm{av}}\rVert_{L_{t,x}^{\infty}}&\geq c\eta^{1/2}|J_k|^{-1/6}E^{-3/2}
\end{align*}
so that we may find $s_*\in J_k$ and $x_*\in \mathbb{R}^3$ with
\begin{align*}
c\eta^{1/2}|J_k|^{-1/6}E^{-3/2}\leq v_{\textrm{av}}(s_*,x_*)&\lesssim r^{-3/2}M(v(s_*);x_0,r),
\end{align*}
where to obtain the last inequality we have recalled the equality $v_{\textrm{av}}(s_*,x_*)=\frac{1}{r^3}\int_{\mathbb{R}^3} \chi(\frac{y-x_*}{r})v(s_*,y)dy$ (indeed, this is just the definition of $v_{\textrm{av}}$).

To conclude the argument, we now transfer this lower bound from $v$ back to $u$.  We consider the cases $k=1$ and $k=2$ individually.  Suppose first that we are in the case $k=1$ and fix $\lambda>0$ to be determined later.  Then for all $t\in J$,
\begin{align*}
M(u(t);x_*,\lambda)&=M(u(t_{j+1});x_*,\lambda)-\int_{t}^{t_{j+1}} \partial_s M(u(s);x_*,\lambda)ds\\
&\geq M(u(t_{j+1});x_*,\lambda)-CE|J_1|\lambda^{-5/6}
\end{align*}
so that since (\ref{eq-k1}) implies
\begin{align*}
u(t_{j+1})=e^{i(t_{j+1}-t_+)\Delta}u(t_+)-iv(t_{j+1})=u_+(t_{j+1})-iv(t_{j+1}).
\end{align*}
By Minkowski's inequality (with the measure $\chi(\frac{x-x_0}{\lambda})^2dx$) this gives
\begin{align*}
M(u(t);x_*,\lambda)&\geq M(v(t_{j+1});x_*,\lambda)-M(u_+(t_{j+1});x_*,\lambda)-CE|J_1|\lambda^{-5/6}.
\end{align*}
Since $u_+$ and $v$ solve the linear Schr\"odinger equation, we get
\begin{align*}
M(v(t_{j+1});x_*,\lambda)&=M(v(s_*);x_*,\lambda)+\int_{s_*}^{t_{j+1}} \partial_sM(v(s);x_*,\lambda)ds\\
&\geq c\eta^{1/2}|J_1|^{-1/6}E^{-3/2}r^{3/2}-C|J_1|\lVert v\rVert_{L_t^\infty(J_1;\dot{H}_x^{s_c}(\mathbb{R}^3))}\lambda^{-5/6}\\
&\geq c\eta^{1/2}|J_1|^{-1/6}E^{-3/2}r^{3/2}-CE|J_1|\lambda^{-5/6}
\end{align*}
provided $\lambda\geq r$, and, noting that $\lVert u_+\rVert_{S(J_1)}\leq \eta^{C_1/15}$ implies that for some $\sigma\in J_1$ we have $\lVert u_+(\sigma)\rVert_{L_x^{15}(\mathbb{R}^3)}\leq (2\eta^{C_1}|J_1|^{-1})^{1/15}$, 
\begin{align*}
M(u_+(t_{j+1});x_*,\lambda)&=M(u_+(\sigma);x_*,\lambda)+\int_{\sigma}^{t_{j+1}} \partial_sM(u_+(s);x_*,\lambda)ds\\
&\leq C\eta^{C_1/15}|J_1|^{-1/15}\lambda^{13/10}+(C|J_1|\lambda^{-5/6})\lVert u_+\rVert_{L_t^\infty(J_1;\dot{H}_x^{s_c}(\mathbb{R}^3))}\\
&\lesssim C\eta^{C_1/15}|J_1|^{-1/15}\lambda^{13/10}+(C|J_1|\lambda^{-5/6})E
\end{align*}

Fix $C'>0$ to be determined momentarily.  Assembling these estimates, and choosing $\lambda=\eta^{-C'}|J_1|^{1/2}\geq r=\eta^{C}|J_1|^{1/2}$, one has
\begin{align*}
M(u(t);x_*,\lambda)&\geq c\eta^{1/2}|J_1|^{-1/6}E^{-3/2}r^{3/2}-C\eta^{C_1/15}|J_1|^{-1/15}\lambda^{13/10}-3CE|J_1|\lambda^{-5/6}\\
&\geq \Big(c\eta^{(1+3C)/2}E^{-3/2}-(C\eta^{\frac{C_1}{15}-\frac{13C'}{10}}+3CE\eta^{\frac{5C'}{6}})\Big)|J_1|^{7/12}.
\end{align*}

An identical argument applies in the case $k=2$.  Appropriate choice of $C'$ and $C_1$ now gives
\begin{align*}
M(u(t);x_*,\eta^{-C'}|J|^{1/2})\geq c\eta^C|J|^{7/12}.
\end{align*}
where we have recalled that $|J_k|=\frac{1}{2}|J|$ and used the second observation in Remark $\ref{rem-absorb}$.
\end{proof}

Since we are in the radial case, we immediately get a ``centered'' version of the concentration result.

\begin{corollary}
\label{cor-concentration}
There exist $c,C>0$ such that for each $j\in G$ 
\begin{align*}
M(u(t);0,C\eta^{-C}|I_j|^{1/2})\geq c\eta^{C}|I_j|^{7/12}.
\end{align*}
\end{corollary}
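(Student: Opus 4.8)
The plan is to deduce the corollary from Proposition \ref{prop37} together with the radial Sobolev embedding; this is where the radial hypothesis on $u$ is used. Recall that for radially symmetric $f\in\dot{H}^{s_c}(\mathbb{R}^3)$ one has the pointwise bound
\[
|f(x)|\lesssim |x|^{-1/3}\lVert f\rVert_{\dot{H}^{s_c}},
\]
the exponent being $\tfrac{3}{2}-s_c=\tfrac{1}{3}$ (see, e.g., \cite{KVnotes}). The rest is essentially bookkeeping governed by Remark \ref{rem-absorb}.

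Fix $j\in G$ and $t\in I_j$, write $R=C\eta^{-C}|I_j|^{1/2}$, and let $x_j$ be the point furnished by Proposition \ref{prop37}, so that $M(u(t);x_j,R)\geq c\eta^{C}E^{-3/2}|I_j|^{7/12}$. The first step is to bound $|x_j|$. If $|x_j|\leq 2R$ there is nothing to do; if $|x_j|>2R$, then $|x|\geq |x_j|/2$ on $B(x_j,R)$, so the radial Sobolev bound together with $\lVert u(t)\rVert_{\dot{H}^{s_c}}\leq E$ gives $|u(t,x)|\lesssim |x_j|^{-1/3}E$ there, whence
\[
M(u(t);x_j,R)^2\leq \int_{B(x_j,R)}|u(t,x)|^2\,dx\lesssim E^2|x_j|^{-2/3}R^3.
\]
Comparing this with the lower bound from Proposition \ref{prop37} and substituting $R=C\eta^{-C}|I_j|^{1/2}$ yields $|x_j|\lesssim E^{15/2}\eta^{-15C/2}|I_j|^{1/2}$; in either case $|x_j|+R\lesssim E^{15/2}\eta^{-15C/2}|I_j|^{1/2}$.

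With the location of $x_j$ under control, transferring the concentration to the origin is elementary: since $\chi\equiv 1$ on $B(0;1/2)$ and $B(x_j,R)\subset B(0;|x_j|+R)$, the choice $\lambda=2(|x_j|+R)$ gives $M(u(t);0,\lambda)^2\geq \int_{B(0;|x_j|+R)}|u(t,x)|^2\,dx\geq M(u(t);x_j,R)^2\geq c^2\eta^{2C}E^{-3}|I_j|^{7/6}$. It then remains to absorb the $E$-dependent factors: by Remark \ref{rem-absorb} (taking $C_2$ sufficiently large) the radius $\lambda\lesssim E^{15/2}\eta^{-15C/2}|I_j|^{1/2}$ is bounded by $\widetilde{C}\eta^{-\widetilde{C}}|I_j|^{1/2}$ and the lower bound $c\eta^{C}E^{-3/2}$ is bounded below by $c\eta^{\widetilde{C}}$ for suitable $\widetilde{C}$; enlarging the radius if necessary via the trivial estimate $M(u(t);0,2\rho)^2\geq \int_{B(0;\rho)}|u(t,x)|^2\,dx$, one obtains the asserted bound after relabeling $C$.

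The main obstacle is the first step. It is tempting to believe that radial symmetry ``immediately'' recenters the concentration at the origin, but a radial function can carry its $L^2$ mass on a distant thin shell, so symmetry together with the mass bound (\ref{mass-bound}) alone---or with a packing argument over the sphere $\{|x|=|x_j|\}$---yields only a \emph{lower} bound on $|x_j|$, never an upper one. The pointwise radial Sobolev decay $|u(t,x)|\lesssim |x|^{-1/3}E$ is precisely the input needed to confine $x_j$ to a ball of radius $\sim\eta^{-C}|I_j|^{1/2}$ about the origin; everything after that is routine constant manipulation of the type for which Remark \ref{rem-absorb} was designed.
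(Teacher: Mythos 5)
Your argument is correct, and it supplies a genuine proof for a step the paper dispatches with the single sentence ``Since we are in the radial case, we immediately get a centered version''---so there is no proof in the paper to compare against line by line. Your route (radial Sobolev decay $|u(t,x)|\lesssim |x|^{-1/3}\lVert u(t)\rVert_{\dot H^{7/6}}$ to cap $|x_j|$, then enlarging the ball to contain $B(x_j,R)$ and absorbing the powers of $E$ via Remark \ref{rem-absorb}) is clean and the exponent bookkeeping checks out: $|x_j|^{2/3}\lesssim E^{5}\eta^{-5C}|I_j|^{1/3}$ indeed gives $|x_j|\lesssim E^{15/2}\eta^{-15C/2}|I_j|^{1/2}$, which Remark \ref{rem-absorb} converts into $\widetilde C\eta^{-\widetilde C}|I_j|^{1/2}$. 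One quibble with your closing commentary: it is not true that a packing argument over the sphere $\{|x|=|x_j|\}$ can ``never'' produce an upper bound on $|x_j|$. It fails if one sums the raw $L^2$ mass against (\ref{mass-bound}), as you say, but the standard Bourgain--Tao version packs $N\sim(|x_j|/R)^{2}$ disjoint balls, estimates the mass in each by H\"older against the scale-invariant norm, $\int_{B_i}|u|^2\lesssim R^{7/3}\lVert u\rVert_{L^9(B_i)}^2$, and then sums in $\ell^{9/2}$ to get $N^{2/9}m^2\lesssim R^{7/3}E^2$, which does bound $|x_j|\lesssim \eta^{-C'}E^{C'}|I_j|^{1/2}$. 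So the radial Sobolev inequality is a convenient input here, not an indispensable one; both routes land in the regime where Remark \ref{rem-absorb} finishes the job.
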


\subsection{Morawetz control}

Let $j\in G$ be given, and set $R=C\eta^{-C}|I_j|^{1/2}$.  The concentration bounds of the previous subsection, together with the estimate
\begin{align*}
M(u(t);0,R)&\leq \lVert u\rVert_{L_x^8(\{x:|x|\leq CR\})}\lVert \chi(\frac{x}{R})\rVert_{L_x^{8/3}}\\
&\lesssim \bigg(\int_{\{x\in \mathbb{R}^3:|x|\leq CR\}} \frac{|u(t)|^8}{R}dx\bigg)^{1/8}R^{10/8},
\end{align*}
now give
\begin{align*}
(c\eta^C|I_j|^{7/12})^8R^{-10}\leq \int_{|x|\leq C\eta^{-C}|I_j|^{1/2}} \frac{|u(t,x)|^8}{|x|}dx.
\end{align*}
This in turn leads to
\begin{align}
\int_{I_j}\int_{|x|\leq C\eta^{-C}|I_j|^{1/2}} \frac{|u(t,x)|^8}{|x|}dx\geq c\eta^{9C}|I_j|^{2/3}.\label{eq-mor1}
\end{align}

Combining this with the Morawetz estimate (Proposition \ref{prop-morawetz}), we obtain 
\begin{proposition}
\label{prop-morawetz-bd}
If $I_0\subset I$ is a union of consecutive intervals $I_j$, $j_-\leq j\leq j_+$, then there exists $j_*$ with $j_-\leq j_+$ so that 
\begin{align*}
|I_{j_*}|\geq c\eta^{3C_1/2}|I|.
\end{align*}
\end{proposition}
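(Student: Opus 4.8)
The plan is to argue by contradiction. Suppose that $|I_j|<c\,\eta^{3C_1/2}|I_0|$ for every $j$ with $j_-\le j\le j_+$, where $c>0$ is a small absolute constant to be fixed; I will deduce the impossible inequality $|I_0|<|I_0|$ after choosing $C_1$ large relative to the absolute constants at hand and then taking $C_2$ large. Since the $I_j$ partition $I$, one has $|I_0|=\sum_{j_-\le j\le j_+}|I_j|$, and the strategy is to bound the good and the bad parts of this sum separately. For the good part, I fix $j\in G$ with $j_-\le j\le j_+$ and feed Corollary \ref{cor-concentration} into the pointwise estimate for $M(u(t);0,R)$ recalled above to obtain (\ref{eq-mor1}), i.e. $\int_{I_j}\int_{|x|\le C\eta^{-C}|I_j|^{1/2}}\tfrac{|u|^8}{|x|}\,dx\,dt\gtrsim\eta^{-9C}|I_j|^{2/3}$. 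Because the contradiction hypothesis forces $|I_j|<c\,\eta^{3C_1/2}|I_0|$, choosing $C_1$ large (and $c$ small) relative to the absolute constants ensures that the concentration ball $\{|x|\le C\eta^{-C}|I_j|^{1/2}\}$ sits inside the single Morawetz ball $\{|x|\le C|I_0|^{1/2}\}$. The intervals $\{I_j:j\in G\cap[j_-,j_+]\}$ being pairwise disjoint and contained in $I_0$, I then sum (\ref{eq-mor1}) over these $j$ and apply Proposition \ref{prop-morawetz} on $I_0$ (using $\lVert u\rVert_{L_t^\infty(I_0;\dot{H}_x^{s_c})}\le E$) to get
\[
\eta^{-9C}\sum_{j\in G\cap[j_-,j_+]}|I_j|^{2/3}\ \lesssim\ \int_{I_0}\int_{|x|\le C|I_0|^{1/2}}\frac{|u|^8}{|x|}\,dx\,dt\ \lesssim\ |I_0|^{2/3}E,
\]
and then, by the elementary subadditivity $\sum_i a_i^{2/3}\ge\big(\sum_i a_i\big)^{2/3}$ valid for $a_i\ge 0$, this yields $\sum_{j\in G\cap[j_-,j_+]}|I_j|\lesssim(\eta^{9C}E)^{3/2}|I_0|$.

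For the bad part, Remark \ref{rem-count} supplies at most $CE^{15}\eta^{-C_1}$ exceptional intervals in all of $I$, each of length $<c\,\eta^{3C_1/2}|I_0|$ by the contradiction hypothesis, so $\sum_{j\in B\cap[j_-,j_+]}|I_j|\le C\,c\,E^{15}\eta^{C_1/2}|I_0|$. Adding the two bounds, I arrive at $|I_0|\lesssim\big((\eta^{9C}E)^{3/2}+E^{15}\eta^{C_1/2}\big)|I_0|$. Since $C_1$ is now frozen, Remark \ref{rem-absorb} shows $\eta^{9C}E\to 0$ and $E^{15}\eta^{C_1/2}\to 0$ as $C_2\to\infty$, uniformly in $E$; hence for $C_2$ large the right-hand side is strictly less than $|I_0|$, which is the desired contradiction. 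Consequently some $j_*\in[j_-,j_+]$ satisfies $|I_{j_*}|\ge c\,\eta^{3C_1/2}|I_0|$, as asserted.

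The step I expect to be the main obstacle is the coordination of the two length scales together with the order of the parameter choices. The bound on the number of exceptional intervals carries a factor $\eta^{-C_1}$, so the length-threshold exponent must strictly exceed $C_1$ in order to absorb it against $\sum_{j\in B}|I_j|$; simultaneously the threshold must be no larger than a constant multiple of $\eta^{2C}$ so that each concentration ball attached to a sub-interval of $I_0$ is swallowed by the single Morawetz ball of radius $\sim|I_0|^{1/2}$. The exponent $3C_1/2$ is the natural compromise, and the argument succeeds precisely because $C_1$ may be fixed first (large relative to all absolute constants) and $C_2$ sent to infinity only afterwards, leaving Remark \ref{rem-absorb} to dispose of every remaining power of $E$.
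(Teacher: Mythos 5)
Your treatment of the exceptional intervals is fine, but the bound you claim for the unexceptional ones contains the essential gap. You deduce $\sum_{j\in G\cap[j_-,j_+]}|I_j|\lesssim(\eta^{9C}E)^{3/2}|I_0|$ from the Morawetz estimate together with the subadditivity $\bigl(\sum a_i\bigr)^{2/3}\leq\sum a_i^{2/3}$, and this is supposed to produce the contradiction. Notice, however, that this step makes no use whatsoever of the contradiction hypothesis on the lengths $|I_j|$ (you invoke it only to fit the concentration balls inside a single Morawetz ball). So, if it were correct, it would assert \emph{unconditionally} that the unexceptional intervals cover only a tiny fraction of $I_0$ --- which is false, since $\#B$ is bounded by $CE^{15}\eta^{-C_1}$ while $J$ may be arbitrarily large, so that generically almost all of $I_0$ is covered by unexceptional intervals. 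The source of the error is the constant in (\ref{eq-mor1}): carrying out the computation from Corollary \ref{cor-concentration}, the lower bound is $(c\eta^{C}|I_j|^{7/12})^{8}R^{-10}\cdot|I_j|=c\eta^{18C}|I_j|^{2/3}$ with $R=C\eta^{-C}|I_j|^{1/2}$, i.e.\ a \emph{positive} power of the small parameter $\eta$ (the displayed $\eta^{-9C}$ is a typo). Consequently the Morawetz estimate only yields $\sum_{j\in G}|I_j|^{2/3}\lesssim\eta^{-18C}E\,|I_0|^{2/3}$, with a constant much larger than $1$, and your subadditivity step then gives $\sum_{j\in G}|I_j|\lesssim\eta^{-27C}E^{3/2}|I_0|$, which is weaker than the trivial bound and produces no contradiction.

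The repair is to extract the gain from the lengths themselves rather than from the Morawetz constant: write
\begin{align*}
\sum_{j_-\leq j\leq j_+}|I_j|\;\leq\;\Bigl(\sup_{j_-\leq j\leq j_+}|I_j|\Bigr)^{1/3}\sum_{j_-\leq j\leq j_+}|I_j|^{2/3},
\end{align*}
bound the sum of the $2/3$-powers over $G$ by $C\eta^{-C'}|I_0|^{2/3}$ via Morawetz and over $B$ by $(\#B)\sup_j|I_j|^{2/3}$, and solve for the supremum. Here a polynomial loss in $\eta$ in the Morawetz input is harmless because it is absorbed into the final power $\eta^{3C_1/2}$, whereas your version requires the Morawetz bound to beat the trivial bound by a factor strictly less than one, which it cannot do. This H\"older-type splitting is exactly the device used in the paper's proof; your contradiction framework and your counting of the exceptional intervals can be retained, but the subadditivity inequality must be replaced by the splitting above (equivalently, insert $(\sup_j|I_j|)^{1/3}\leq(c\eta^{3C_1/2}|I_0|)^{1/3}$ from your contradiction hypothesis into the sum over $G$).
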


\begin{proof}
It suffices to show
\begin{align*}
\sup_{j_-\leq j\leq j_+} |I_j|\geq c\eta^{3C_1/2}|I|.
\end{align*}
For this, we recall (\ref{eq-mor1}) above and Remark $\ref{rem-count}$, from which one gets the bound
\begin{align*}
|I_0|=\sum_{j_-\leq j\leq j_+} |I_j|&\leq (\sup_j |I_j|)^{1/3}(\sum_j |I_j|^{2/3})\\
&\leq (\sup_j |I_j|)^{1/3}(\sum_{j\in G} |I_j|^{2/3}+\sum_{j\in B} |I_j|^{2/3})\\
&\leq (\sup_j |I_j|)^{1/3}\bigg(\sum_{j\in G} C\eta^{-9C}\int_{I_j}\int_{\mathbb{R}^3} \frac{|u|^8}{|x|}dxdt+(\#B)|I_0|^{2/3}\bigg)\\
&\leq (\sup_j |I_j|)^{1/3}(c\eta^{-9C}E+CE^{3}\eta^{-C_1})|I_0|^{2/3}\\
&\leq (\sup_j |I_j|)^{1/3}C\eta^{-C_1}|I_0|^{2/3}.
\end{align*}
This implies the desired inequality.
\end{proof}

\subsection{Recursive control over unexceptional intervals}

In this subsection, we reproduce an argument originally due to Bourgain \cite{B} which shows that there exists a time $t_*\in [t_-,t_+]$ at which the intervals $I_j$ concentrate.  We follow the presentation in \cite[Proposition 3.8]{Tao} (see also a related treatment in \cite{KVZ}).  Each step in the process is a consequence of the Morawetz-based control of the previous two subsections, and is expressed in the following pair of lemmas (stated separately for notational convenience).

In what follows, we set $J'=\#G$.  For a collection of sets $\mathcal{S}$, we also use the notation $\cup \mathcal{S}=\bigcup_{S\in\mathcal{S}} S$.

\begin{lemma}
\label{lem-1}
There exist $j_-,j_+\in \{1,2,\cdots,J\}$ so that
\begin{enumerate}
\item[(i)] $j_+-j_->c\eta^{C_1/2}J'$,
\item[(ii)] $j\in G$ for $j_-\leq j\leq j_+$, and
\item[(iii)] there exists $j_1\in\mathbb{N}$ with $j_-\leq j_1\leq j_+$ so that $$|I_{j_1}|\geq c\eta^{C}|\bigcup_{j_-\leq j\leq j_+} I_j|.$$
\end{enumerate}
\end{lemma}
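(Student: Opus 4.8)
\textbf{Proof plan for Lemma \ref{lem-1}.}

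The plan is to extract, from the full collection $\{I_j : j \in G\}$ of $J' = \#G$ unexceptional intervals, a run of \emph{consecutive} intervals that (a) is long (linear in $J'$), (b) lies entirely inside $G$, and (c) contains a single interval whose length dominates a definite fraction of the total length of the run. The first step is to discard the exceptional indices. By Remark \ref{rem-count} we have $\#B \leq CE^{15}\eta^{-C_1}$, while $J' = \#G = J - \#B$; using the absorption bookkeeping of Remark \ref{rem-absorb} one checks that (after choosing $C_2$ large) $\#B$ is a small fraction of $J$, so $J'$ is comparable to $J$ and, more to the point, the $G$-indices cannot be too badly fragmented by the $B$-indices. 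Concretely, since there are at most $\#B$ exceptional indices, the set $G \subset \{1,\dots,J\}$ decomposes into at most $\#B + 1$ maximal runs of consecutive integers; hence at least one such run $\{j : j_- \le j \le j_+\}$ has length $j_+ - j_- \gtrsim J'/(\#B+1) \gtrsim \eta^{C_1} J' / (CE^{15})$, and absorbing $E^{15}$ into a power of $\eta$ (Remark \ref{rem-absorb}, third observation) and adjusting the exponent gives the bound $j_+ - j_- > c\eta^{C_1/2} J'$ required in (i), with (ii) automatic by maximality of the run.

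For (iii), I would apply Proposition \ref{prop-morawetz-bd} to the interval $I_0 = \bigcup_{j_- \le j \le j_+} I_j$, which by construction is a union of consecutive $I_j$'s: this directly produces an index $j_1$ in the range with $|I_{j_1}| \geq c\eta^{3C_1/2}|I_0|$. The only subtlety is matching the exponent: Proposition \ref{prop-morawetz-bd} is stated with $|I|$ (the total interval) on the right, so I would either re-run its short proof with $I$ replaced by $I_0$ throughout (the argument there only uses that $I_0$ is a union of consecutive $I_j$'s, and bounds $\sum_{j} |I_j|^{2/3} \le \#B\,|I_0|^{2/3} + \sum_{j\in G}\cdots$, giving $\sup_j|I_j| \ge c\eta^{3C_1/2}|I_0|$ verbatim), or simply absorb the discrepancy into the power of $\eta$. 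Either way one obtains (iii) with an exponent $C$ that is permitted to be larger than $3C_1/2$, which is harmless since the lemma only asserts existence of \emph{some} $c,C>0$.

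I expect the main obstacle to be purely bookkeeping: keeping the three families of exponents ($C_0 \le C_1 \le C_2$, the floating constant $C$ in each sub-estimate, and the powers of $\eta$ versus powers of $E$) consistent so that all the "choose $C_2$ large'' absorptions in Remark \ref{rem-absorb} can be invoked simultaneously, and so that the final exponents on $\eta$ in (i) and (iii) come out as stated. There is no analytic difficulty here — the hard analysis (concentration, Morawetz) is already packaged in Corollary \ref{cor-concentration} and Proposition \ref{prop-morawetz-bd}; this lemma is the combinatorial "pigeonhole plus one clean interval'' step that feeds the recursive concentration argument of the following lemma, and the only real care needed is that the run extracted for (i) is the \emph{same} run used for (iii), which is why both are stated together.
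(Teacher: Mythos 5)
Your proposal follows essentially the same route as the paper: pigeonhole $G$ into at most $O(\#B)$ maximal runs of consecutive indices to find one long run (giving (i) and (ii)), then invoke Proposition \ref{prop-morawetz-bd} on that run to produce the dominant interval in (iii), with the $|I|$ versus $|I_0|$ discrepancy handled exactly as you suggest. The only caveat --- one shared with the paper's own write-up, which asserts $CE^{15}\eta^{-C_1}\leq C\eta^{-C_1/2}$ --- is that absorbing $E^{15}$ into a power of $\eta$ via Remark \ref{rem-absorb} makes the exponent in (i) slightly \emph{larger} than $C_1$ (e.g.\ $C_1+15/C_2$) rather than smaller, so one does not literally recover $\eta^{C_1/2}$; since the recursive argument downstream only requires some fixed power of $\eta$, this is immaterial.
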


\begin{proof}
Recall that $\#B\leq CE^{15}\eta^{-C_1}\leq C\eta^{-C_1/2}$, and partition $G=\{1,\cdots,J\}\setminus B$ into a collection $\{G_1,G_2,\cdots,G_m\}$ of nonempty sets of consecutive integers, with $m\leq 2C\eta^{-C_1/2}$.  Then there exists $i\in \{1,\cdots,m\}$ so that $G_i$ contains at least $(4C)^{-1}\eta^{C_1/2}J'$ elements (if this were not the case, then the total number of elements in $G$ would be at most $J'/2$).  Set $j_-=\min G_i$ and $j_+=\max G_i$ and observe that this choice satisfies (i) and (ii).

To establish (iii), we appeal to Proposition \ref{prop-morawetz-bd}, which gives the existence of $j_1$ satisfying the stated conditions.  This completes the proof of the lemma.
\end{proof}

If $j_+-j_-$ is larger than (a suitable multiple of) $\eta^{-C}$, this procedure can be iterated, this time removing $j_1$ and all intervals of comparable or longer length, rather than exceptional intervals.

\begin{lemma}
\label{lem2}
Let $j_0,j_1,j_2\in \{1,2,\cdots,J\}$ be given with $1\leq j_0<j_1<j_2\leq J$, and suppose $j_2-j_0>\tilde{C}\eta^{-C}$ and $|I_{j_1}|\geq \tilde{c}\eta^{C}|\cup \{I_j:j_0\leq j\leq j_2\}|$ with $\tilde{C}$ sufficiently large depending on $\tilde{c}$.  Then there exist $j_-$ and $j_+$ with $j_0\leq j_-<j_+\leq j_2$ so that
\begin{enumerate}
\item[(i)] $j_+-j_->c\eta^{C}(j_2-j_0)$,
\item[(ii)] $j_1\not\in [j_-,j_+]$ and $|I_j|\leq |I_{j_1}|/2$ for $j_-\leq j\leq j_+$, and
\item[(iii)] there exists $j_*\in \mathbb{N}$ with $j_-\leq j_*\leq j_+$ so that $$|I_{j_*}|\geq c\eta^{C}|\bigcup_{j_-\leq j\leq j_+} I_j|.$$
\end{enumerate}
\end{lemma}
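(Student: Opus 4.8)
The plan is to mirror the proof of Lemma \ref{lem-1}, but now using the subcollection $\{I_j:j_0\le j\le j_2\}$ with the single long interval $I_{j_1}$ playing the role formerly played by the exceptional intervals. First I would set $n:=j_2-j_0$ and observe that the hypothesis $|I_{j_1}|\ge \tilde c\eta^C|\cup\{I_j:j_0\le j\le j_2\}|$ forces an upper bound on the number of indices $j\in[j_0,j_2]$ with $|I_j|>|I_{j_1}|/2$: indeed each such interval has length at least $|I_{j_1}|/2\ge (\tilde c/2)\eta^C|\cup\{I_j:j_0\le j\le j_2\}|$, so summing lengths (which cannot exceed $|\cup\{I_j:j_0\le j\le j_2\}|$) shows there are at most $C\tilde c^{-1}\eta^{-C}$ of them. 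Call this set $B'$ (the ``long'' intervals in the window, necessarily containing $j_1$), and note $\#B'\le C\tilde c^{-1}\eta^{-C}$.

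Next I would remove $B'$ from $\{j_0,\dots,j_2\}$ and decompose the complement into maximal runs of consecutive integers $G_1',\dots,G_{m'}'$, with $m'\le \#B'+1\le C\eta^{-C}$. Since $n-\#B'\ge n - C\tilde c^{-1}\eta^{-C}\ge n/2$ once $\tilde C$ is chosen large enough that $C\tilde c^{-1}\eta^{-C}\le n/2$ (this is exactly where the hypothesis $j_2-j_0>\tilde C\eta^{-C}$, with $\tilde C$ large depending on $\tilde c$, is used), by pigeonhole some run $G_i'$ has at least $(2m')^{-1}n\ge c\eta^C n$ elements. Setting $j_-=\min G_i'$ and $j_+=\max G_i'$ gives (i); and by construction every $j\in[j_-,j_+]$ lies outside $B'$, hence satisfies $|I_j|\le |I_{j_1}|/2$ and in particular $j_1\notin[j_-,j_+]$, which is (ii). (One should also note $j_0\le j_-<j_+\le j_2$, since $G_i'$ is a nonempty run with at least two elements, using $n$ large.)

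Finally, to obtain (iii), I would invoke Proposition \ref{prop-morawetz-bd} applied to the union $I_0':=\cup\{I_j:j_-\le j\le j_+\}$, which is a union of consecutive intervals with all indices in $G$ (since $[j_-,j_+]\subset[j_0,j_2]$ and the hypothesis of the surrounding argument puts these in $G$); this yields some $j_*\in[j_-,j_+]$ with $|I_{j_*}|\ge c\eta^{3C_1/2}|I_0'|\ge c\eta^C|\cup_{j_-\le j\le j_+}I_j|$ after adjusting the constant $C$, which is (iii). The only genuinely delicate point is the bookkeeping of constants: one must verify that the same ``$C$'' can be used in (i), (ii)(implicitly), and (iii) and that the threshold $\tilde C$ in the hypothesis really only needs to depend on $\tilde c$ (and not on $\eta$ or $E$) — this follows because all the counting bounds above are powers of $\eta$ with $\eta$-independent prefactors, and the single place $\tilde C$ enters is the inequality $C\tilde c^{-1}\eta^{-C}\le (j_2-j_0)/2$, which is implied by $j_2-j_0>\tilde C\eta^{-C}$ as soon as $\tilde C\ge 2C\tilde c^{-1}$. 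I expect this constant-chasing to be the main (though routine) obstacle; the combinatorial skeleton is identical to Lemma \ref{lem-1}.
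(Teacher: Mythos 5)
Your proposal is correct and follows essentially the same route as the paper's proof: bound the number of "long" intervals (length at least $|I_{j_1}|/2$) in the window by $O(\tilde c^{-1}\eta^{-C})$ via the disjointness/total-length argument, delete them, pigeonhole over the resulting runs of consecutive indices to find a long run giving (i) and (ii), and then invoke Proposition \ref{prop-morawetz-bd} for (iii). The only difference is cosmetic: you remove all long intervals in $[j_0,j_2]$ (which is what property (ii) actually requires), whereas the paper's displayed removed set is restricted to $j\leq j_1$, an apparent typo; your version is the correct reading.
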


\begin{proof}
The argument is similar to the proof of the previous lemma.  Define $A=\{j_0,j_0+1,\cdots,j_2\}\setminus (\{j_1\}\cup\{j:j_0\leq j\leq j_1$ and $|I_j|\geq |I_{j_1}|/2\})$, and partition $A$ into a collection of $m$ nonempty sets of consecutive integers.  Note that since the removed set has size bounded by $1+C\eta^{-C}\leq 2C\eta^{-C}$ (if the interval $I'=\cup\{I_j:j_0\leq j\leq j_2\}$ contained more than $2c^{-1}\eta^{-C}$ disjoint intervals of length at least $|I_{j_1}|/2\geq c\eta^{C}|I'|/2$, this would imply $|I'|\geq 2|I'|$, a contradiction), one can form such a partition with $$m\leq 2C\eta^{-C}.$$  One can now choose $c>0$ so that at least one of these sets contains more than $c\eta^{C}(j_2-j_0)$ elements (otherwise the total number of elements in $A$ is bounded by a multiple of $2Cc(j_2-j_0)$, while $\#A\geq (j_2-j_0)-4\tilde{c}^{-1}\eta^{-C}\geq (1-4(\tilde{C}\tilde{c})^{-1})(j_2-j_0)$, which is a contradiction for $c$ large).  Denote the set chosen this way by $A'$.  The claim now follows by choosing $j_-=\min A'$ and $j_+=\max A'$, and applying Proposition $\ref{prop-morawetz-bd}$ to obtain the existence of $j_*$ as in (iii).
\end{proof}

Applying Lemma $\ref{lem-1}$ and subsequently repeatedly applying Lemma $\ref{lem2}$ a finite number of times yields the following proposition (which is the analogue in our setting of Proposition 3.8 in \cite{Tao}).

\begin{proposition}
\label{prop-recursive}
There exist constants $c,C>0$ and values $K>c\eta^C\log(J')$ and $t_*\in [t_-,t_+]$ so that one can select $K$ distinct unexceptional intervals $I_{j_1},\cdots,I_{j_K}$, with 
\begin{align*}
|I_{j_1}|\geq 2|I_{j_2}|\geq\cdots\geq 2^{K-1}|I_{j_K}|,
\end{align*}
and, for $1\leq k\leq K$, $\dist(t_*,I_{j_k})\leq C\eta^{-C}|I_{j_k}|$.
\end{proposition}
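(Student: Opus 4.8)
The plan is to iterate Lemma~\ref{lem2} starting from the block produced by Lemma~\ref{lem-1}, tracking how the number of available indices decays at each stage. First I apply Lemma~\ref{lem-1} to obtain an initial block of consecutive unexceptional indices $B_0=[j_-^{(0)},j_+^{(0)}]$ with $j_+^{(0)}-j_-^{(0)}>c\eta^{C_1/2}J'$, together with a distinguished index $\ell_1\in B_0$ satisfying $|I_{\ell_1}|\geq c\eta^{C}\big|\bigcup_{j\in B_0}I_j\big|$; the interval $I_{\ell_1}$ will be the first (and longest) of the intervals $I_{j_1},\dots,I_{j_K}$ to be selected.

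Next I set up the recursion. Given a block $B_{k-1}=[j_-^{(k-1)},j_+^{(k-1)}]$ of consecutive unexceptional indices and a distinguished index $\ell_k\in B_{k-1}$ with $|I_{\ell_k}|\geq c\eta^{C}\big|\bigcup_{j\in B_{k-1}}I_j\big|$, I apply Lemma~\ref{lem2} with $(j_0,j_1,j_2)=(j_-^{(k-1)},\ell_k,j_+^{(k-1)})$: its second hypothesis is exactly the displayed lower bound on $|I_{\ell_k}|$, and its first hypothesis, $j_+^{(k-1)}-j_-^{(k-1)}>\tilde C\eta^{-C}$, is the condition governing how long the recursion can continue. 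The lemma returns a sub-block $B_k\subset B_{k-1}$ with $j_+^{(k)}-j_-^{(k)}>c\eta^{C}(j_+^{(k-1)}-j_-^{(k-1)})$, not containing $\ell_k$, on which every $I_j$ has length at most $|I_{\ell_k}|/2$, and a new distinguished index $\ell_{k+1}\in B_k$ with $|I_{\ell_{k+1}}|\geq c\eta^{C}\big|\bigcup_{j\in B_k}I_j\big|$; in particular $|I_{\ell_{k+1}}|\leq\tfrac12|I_{\ell_k}|$. Since $\ell_k\notin B_k$, the indices $\ell_1,\ell_2,\dots$ produced this way are automatically distinct, and all lie in $G$.

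Iterating while the block-length hypothesis persists, after $k$ steps the block contains more than $(c\eta^{C})^{k}\,c\eta^{C_1/2}J'$ indices, so the recursion runs for at least $K-1$ steps, with $K-1$ essentially $\log\!\big(c\eta^{C_1/2}J'/(\tilde C\eta^{-C})\big)$ divided by $\log(1/(c\eta^{C}))$. The denominator is $O(\log(1/\eta))$, and choosing $C_2$ large makes $\eta$ small enough that $\eta^{C}\leq 1/\log(1/\eta)$; after adjusting constants and invoking Remark~\ref{rem-absorb} this yields $K>c\eta^{C}\log(J')$. Renaming $j_k:=\ell_k$, the halving bound gives $|I_{j_1}|\geq 2|I_{j_2}|\geq\cdots\geq 2^{K-1}|I_{j_K}|$. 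For the point $t_*$, I take any $t_*\in I_{j_K}$: since the blocks are nested, $B_{K-1}\subset B_{k-1}$ for every $k\leq K$, and $\ell_k\in B_{k-1}$, so both $t_*$ and $I_{j_k}$ lie in the genuine interval $\bigcup_{j\in B_{k-1}}I_j$; hence $\dist(t_*,I_{j_k})\leq\big|\bigcup_{j\in B_{k-1}}I_j\big|\leq c^{-1}\eta^{-C}|I_{j_k}|$ by the distinguished-interval bound satisfied by $\ell_k$ at level $k-1$ (which for $k=1$ is the bound from Lemma~\ref{lem-1}). Absorbing constants gives $\dist(t_*,I_{j_k})\leq C\eta^{-C}|I_{j_k}|$ for all $k$.

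The main obstacle is purely bookkeeping: propagating the hypotheses of Lemma~\ref{lem2} correctly through the recursion — in particular keeping the multiplicative loss $c\eta^{C}$ uniform across steps and pinning down the stopping index $K$ — and then verifying that the resulting lower bound for $K$ can be written in the form $c\eta^{C}\log(J')$. This last point uses only that $\eta$ is small, which is guaranteed by choosing $C_2$ large in the definition $\eta=\tfrac{1}{C_2}(1+E)^{-C_2}$.
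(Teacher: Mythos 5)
Your proposal is correct and follows essentially the same route as the paper: a single application of Lemma~\ref{lem-1} followed by iteration of Lemma~\ref{lem2} on nested blocks, with the block size decaying by a factor $c\eta^{C}$ per step, the count $K\gtrsim\log(J')/\log(1/\eta)\gtrsim\eta^{C}\log(J')$, and $t_*$ chosen in the innermost block so that nestedness together with the distinguished-interval bound yields $\dist(t_*,I_{j_k})\leq C\eta^{-C}|I_{j_k}|$ for every $k$. The bookkeeping (distinctness via $\ell_k\notin B_k$, the halving of lengths from condition (ii) of Lemma~\ref{lem2}) matches the paper's argument.
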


\begin{proof}
First, apply Lemma $\ref{lem-1}$ to find $j_-$, $j_+$ and $j_1$ satisfying the conditions given by the statement of the lemma.  Then for all $t\in \tilde{I}_1:=\cup\{I_j:j_-\leq j\leq j_+\}$ we have $$\dist(t,I_{j_1})\leq |\tilde{I}_1|\leq c^{-1}\eta^{-C}|I_{j_1}|.$$  Let $\tilde{C}$ be a constant admissible for the statement of Lemma $\ref{lem2}$ where $\tilde{c}$ is chosen as $c$ in (iii) of Lemma $\ref{lem-1}$.  If $j_+-j_-\leq \tilde{C}\eta^{-C}$, we can set $K=1$ and choose any $t_*\in I_{j_1}$ to obtain the conclusion.

Alternatively, if $j_+-j_->\tilde{C}\eta^{-C}$ then we can apply Lemma $\ref{lem2}$ with $j_0=j_-$ and $j_2=j_+$ to find new values of $j_-$ and $j_+$ satisfying the conditions stated in that lemma.  For all $t\in \tilde{I}_2:=\cup\{I_j:j_-\leq j\leq j_+\}\subset\tilde{I}_1$ we have
$\dist(t,I_{j_2})\leq |\tilde{I}_{j_2}|\leq c^{-1}\eta^{-C}|I_{j_2}|$.  If $j_+-j_-\leq \tilde{C}\eta^{-C}$, the conclusion follows by setting $K=2$.  Otherwise, we repeats the argument with another application of Lemma $\ref{lem2}$.  This procedure can be iterated $K\gtrsim \eta^{C}\log J'$ times, until one of the constructed pairs $(j_-,j_+)$ satisfies $j_+-j_-\leq \tilde{C}\eta^{-C}$, or until no more intervals are left to remove.
\end{proof}

\subsection{Conclusion of the argument}

We now complete the proof of Proposition $\ref{prop-uniform}$.  Apply Proposition $\ref{prop-recursive}$ to choose $t_*$, $K$, and $I_{j_1},\cdots,I_{j_K}$ satisfying the stated properties.  Then, for each $k=1,\cdots,K$, we have,  by Corollary $\ref{cor-concentration}$,
\begin{align*}
M(u(t);0,C\eta^{-C}|I_{j_k}|^{1/2})\geq c\eta^{C}|I_{j_k}|^{7/12}.
\end{align*}
for all $t\in I_{j_k}$.  This gives, with $t\in I_{j_k}$,
\begin{align*}
&M(u(t_*);0,C\eta^{-C}|I_{j_k}|^{1/2})\\
&\hspace{0.2in}=M(u(t);0,C\eta^{-C}|I_{j_k}|^{1/2})-\int_t^{t_*} \partial_sM(u(s);0,C\eta^{-C}|I_{j_k}|^{1/2})ds\\
&\hspace{0.2in}\geq c\eta^{C}|I_{j_k}|^{7/12}-(|I_{j_k}|+\dist(t_*,I_{j_k}))C\eta^{5C/6}|I_{j_k}|^{-5/12}E\\
&\hspace{0.2in}\geq (c\eta^{C}-C\eta^{5C/6}E)|I_{j_k}|^{7/12}\\
&\hspace{0.2in}\geq c\eta^{C}|I_{j_k}|^{7/12}
\end{align*}
for suitable choices of the constants, while the mass bound (\ref{mass-bound}) gives
\begin{align*}
M(u(t_*);0,2C\eta^{-C}|I_{j_k}|^{1/2})&\lesssim \eta^{-7C/6}|I_{j_k}|^{7/12}E\lesssim \eta^{-C}|I_{j_k}|^{7/12}.
\end{align*}

Fix $1\leq k\leq K$.  Now, setting $B_k=B(0,C\eta^{-C}|I_{j_k}|^{1/2})$ and fixing $N\geq 1$, it follows that
\begin{align*}
&\int_{B_k\setminus \cup\{B_\ell:k+N\leq \ell\leq K\}} |u(t_*)|^2dx\\
&\hspace{0.2in}\geq \int_{B_k} |u(t_*)|^2dx-\sum_{\ell=k+N}^K \int_{B_\ell} |u(t_*)|^2dx\\
&\hspace{0.2in}\geq M(u(t_*);0,C\eta^{-C}|I_{j_k}|^{1/2})^2-\sum_{\ell=k+N}^K M(u(t_*);0,2C\eta^{-C}|I_{j_\ell}|^{1/2})^2\\
&\hspace{0.2in}\geq c\eta^C|I_{j_k}|^{7/6}-\sum_{\ell=k+N}^K C\eta^{-C}|I_{j_\ell}|^{7/6}\\
&\hspace{0.2in}\geq c\eta^C|I_{j_k}|^{7/6},
\end{align*}
provided $N=C\log(\eta^{-1})$ with $C$ sufficiently large, where we have used the construction of the sequence $(I_{j_k})$ to obtain the inequality $\sum_{\ell=k+N}^K |I_{j_\ell}|^{7/6}\leq 2^{-7N/12}|I_{j_k}|^{7/6}$.

This in turn gives
\begin{align*}
c\eta^C|I_{j_k}|^{7/6}&\leq C\eta^{-7C/3}|I_{j_{k+N}}|^{7/6}\int_{B_k\setminus\cup\{B_\ell:k+N\leq \ell\leq K\}} \frac{|u(t_*,x)|^2}{|x|^{7/3}}dx\\
&\lesssim \eta^{-7C/3}|I_{j_k}|^{7/6}\int_{B_k\setminus\cup \{B_\ell:k+N\leq \ell\leq K\}} \frac{|u(t_*,x)|^2}{|x|^{7/3}}dx
\end{align*}
where in the last inequality we have used the bound $|I_{j_{k+N}}|\leq |I_{j_k}|$.  

We now sum over values of $k$ in the set $\{k:1\leq k\leq K$ and $k=1+mN $ for some $m\geq 0\}$.  Since this set has size comparable to $K/N$, we obtain (via the Hardy inequality $\lVert u/|x|^\alpha\rVert_{L^2}\lesssim \lVert |\nabla|^{\alpha}u\rVert_{L^2}$ on $\mathbb{R}^3$ for $0\leq \alpha<3/2$)
\begin{align*}
c\eta^C(K/N)\leq \eta^{-7C/3}\int_{B_1} \frac{|u(t_*,x)|^2}{|x|^{7/3}}dx\lesssim \eta^{-7C/3}\int_{\mathbb{R}^3} ||\nabla|^{s_c}u(t_*)|^2dx\lesssim \eta^{-7C/3}E^2,
\end{align*}
i.e.
\begin{align*}
K\lesssim N\eta^{-C}\leq C\eta^{-C}.
\end{align*}
Recalling the bound $K\gtrsim \eta^C\log(J')$ then gives $\log(J')\lesssim \eta^{-C}$, so that $J'\leq \exp(C\eta^{-C})=\exp(CE^C)$.  Moreover, recalling the estimate $\#B\lesssim \eta^{-C}$ given by Remark $\ref{rem-count}$, we have $J\leq \exp(CE^C)$.  We therefore get
\begin{align*}
\int_{t_-}^{t_+}\int_{\mathbb{R}^3}|u(t,x)|^{15}dxdt=\sum_{j=1}^J \int_{I_j}\int_{\mathbb{R}^3}|u(t,x)|^{15}dxdt\leq 2J\eta\lesssim \exp(CE^C)
\end{align*}
which is the desired uniform bound.  This completes the proof of Proposition $\ref{prop-uniform}$.

\section{Proof of Theorem \ref{thm1}}

In this section we prove Theorem $\ref{thm1}$.

\begin{proof}[Proof of Theorem $\ref{thm1}$]
Fix $\delta>0$ to be determined later in the argument.  Let $u$ be a radial solution to (\ref{eq1}) on a time interval $I=[t_-,t_+]$.  

For each interval $J\subset I_{\textrm{max}}$, set 
\begin{align*}
S(u,J)&=\lVert u\rVert_{L_t^\infty(J;\dot{H}_x^{s_c}(\mathbb{R}^3))}+\lVert u\rVert_{L_t^\infty(J;\dot{H}_x^{s_c+1}(\mathbb{R}^3))}\\
&\hspace{0.4in}+\lVert u\rVert_{L_{t,x}^{15}(J\times\mathbb{R}^3)}
%\\
%&\hspace{0.2in}
+\lVert |\nabla|^{s_c}u\rVert_{L_{t,x}^{10/3}(J\times \mathbb{R}^3)}+\lVert |\nabla|^{s_c+1}u\rVert_{L_{t,x}^{10/3}(J\times \mathbb{R}^3)},
\end{align*}
and for ease of notation set $S(u,T)=S(u,[0,T])$ for each $T>0$.  Fix a parameter $R_0>0$ to be determined later in the argument.  

We use a continuity argument to show that if $R_0$ is chosen to be sufficiently large then $S(u,T)\leq R_0$ for all $T$.  Suppose that $T>0$ is such that $S(u,T)\leq R_0$.  By the local theory, we can find $T'>T$ so that $S(u,T')\leq 2R_0$.  This, combined with our hypothesis on the $L_t^\infty\dot{H}_x^{s_c-\delta}$ norm, gives 
\begin{align*}
\lVert u\rVert_{L_t^\infty([0,T'];\dot{H}_x^{s_c}(\mathbb{R}^3))}&\leq \sup_{t\in [0,T']} \lVert u\rVert_{\dot{H}_x^{s_c-\delta}}^{1-\delta}\lVert u\rVert_{\dot{H}_x^{s_c+1-\delta}}^\delta\leq E_0^{1-\delta}(2R_0)^\delta,
\end{align*}
and thus, as a consequence of Proposition $\ref{prop-uniform}$,
\begin{align}
\lVert u\rVert_{L_{t,x}^{15}([0,T']\times\mathbb{R}^3)}\leq C\exp(CE_0^{C}R_0^{C\delta}),\label{2-eq-unif-1}
\end{align}
where we have set $E_0:=E$ with $E\geq 1$ as in ($\ref{eq-E0}$).

Now, by the Sobolev and Strichartz estimates, one has, for any $J=[t_1,t_2]\subset I$,
\begin{align}
\nonumber S(u,J)&\lesssim \lVert u(t_1)\rVert_{\dot{H}_x^{s_c}\cap \dot{H}_x^{s_c+1}}+\lVert |u|^6u\rVert_{N(J)}+\lVert |\nabla|[|u|^6u]\rVert_{N(J)}\\
\nonumber &\lesssim \lVert u(t_1)\rVert_{\dot{H}_x^{s_c}\cap \dot{H}_x^{s_c+1}}+\lVert |\nabla|^{s_c}u\rVert_{L_{t,x}^{10/3}(J\times\mathbb{R}^3)}\lVert u\rVert_{L_{t,x}^{15}(J\times\mathbb{R}^3)}^6\\
\nonumber &\hspace{0.2in}+\lVert |\nabla|^{s_c+1}u\rVert_{L_{t,x}^{10/3}(J\times\mathbb{R}^3)}\lVert u\rVert_{L_{t,x}^{15}(J\times \mathbb{R}^3)}^6\\
&\lesssim \lVert u(t_1)\rVert_{\dot{H}_x^{s_c}\cap \dot{H}_x^{s_c+1}}+2S(u,J)\lVert u\rVert_{L_{t,x}^{15}(I\times\mathbb{R}^3)}^6\label{eq-str-01}
\end{align}

Let $\tilde{C}\geq 1$ denote the implicit constant in the inequality (\ref{eq-str-01}).  Fix $\epsilon>0$ and partition the interval $[0,T]$ into intervals $I_1,I_2,\cdots,I_m$ with $$\lVert u\rVert_{L_{t,x}^{15}(I_j\times\mathbb{R}^3)}^6=\epsilon,\quad 1\leq j<m,$$ and $$\lVert u\rVert_{L_{t,x}^{15}(I_m\times\mathbb{R}^3)}^6<\epsilon.$$  Note that (\ref{2-eq-unif-1}) implies 
\begin{align}
m\leq (C/\epsilon)\exp(CE_0^C(2R_0)^{C\delta}).\label{2-eq-m}
\end{align}
Applying (\ref{eq-str-01}) to each of the intervals $I_j=[t_j,t_{j+1}]$, $1\leq j\leq m$ now gives $S(u,I_1)\leq 2\tilde{C}\lVert u(0)\rVert_{\dot{H}_x^{s_c}\cap \dot{H}_x^{s_c+1}}$ and
\begin{align*}
S(u,I_j)\leq 2\tilde{C}\lVert u(t_j)\rVert_{\dot{H}_x^{s_c}\cap \dot{H}_x^{s_c+1}}\leq 2\tilde{C}S(u,I_{j-1})
\end{align*}
for all $1<j\leq m$, provided $\epsilon\leq (4\tilde{C})^{-1}$.  We therefore get $S(u,I_j)\leq (2\tilde{C})^{j-1}S(u,I_1)$ for $1<j\leq m$, which in turn (combined with the above) gives $$S(u,I_j)\leq (2\tilde{C})^{j}\lVert u(0)\rVert_{\dot{H}_x^{s_c}\cap \dot{H}_x^{s_c+1}}$$ for all $1\leq j\leq m$.

Putting this together, we get 
\begin{align*}
S(u,T')&\leq (2\tilde{C})^m\lVert u_0\rVert_{\dot{H}_x^{s_c}\cap\dot{H}_x^{s_c+1}}+\bigg(\sum_{j=1}^m \lVert u\rVert_{L_{t,x}^{15}(I_j\times\mathbb{R}^3)}^{15}\bigg)^{1/15}\\
&\hspace{0.4in}+\bigg(\sum_{j=1}^m \lVert |\nabla|^{s_c}u\rVert_{L_{t,x}^{10/3}(I_j\times\mathbb{R}^3)}^{10/3}\bigg)^{3/10}+\bigg(\sum_{j=1}^m\lVert |\nabla|^{s_c+1}u\rVert_{L_{t,x}^{10/3}(I_j\times\mathbb{R}^3)}^{10/3}\bigg)^{3/10}\\
&\leq 4C'(2\tilde{C})^m\lVert u(0)\rVert_{\dot{H}_x^{s_c}\cap\dot{H}_x^{s_c+1}}
\end{align*}
for some $C'=C'(C)>0$.

Now, choose 
\begin{align*}
R_0\geq 4C'(2\tilde{C})^{(C/\epsilon)\exp(2CE_0^C)}\lVert u_0\rVert_{\dot{H}_x^{s_c}\cap \dot{H}_x^{s_c+1}}.
\end{align*}
and $\delta_0>0$ small enough to ensure 
\begin{align*}
(2R_0)^{C\delta}\leq 2.
\end{align*}
Recalling (\ref{2-eq-m}), we therefore obtain $S(u,T')\leq R_0$.  Since $T>0$ was an arbitrary value for which $S(u,T)\leq R_0$, this establishes the desired uniform bound.
\end{proof}

\noindent {\it Remark}. A close examination of the proof shows that the regularity threshold $s_c+1$ in the statement of Theorem $\ref{thm1}$ 
can be relaxed to $s_c+\epsilon$, as given in the statement below (with $\delta$ now dependent on $\epsilon$).

\begin{corollary}
There exists $C>0$ such that for each $E\geq 1$, $M>0$ and $\epsilon>0$ there exists $\delta_0>0$ such that for all $0<\delta<\delta_0$ and $0\in J\subset\mathbb{R}$, if 
$u\in C_t(J;\dot{H}_x^{s_c-\delta}(\mathbb{R}^3)\cap \dot{H}_x^{s_c}(\mathbb{R}^3))$ is a radially symmetric solution to (\ref{eq1}) which satisfies 
$$\lVert u_0\rVert_{\dot{H}_x^{s_c}(\mathbb{R}^3)\cap \dot{H}_x^{s_c+\epsilon}(\mathbb{R}^3)}\leq M,$$ and, 
$$\lVert u\rVert_{L_t^\infty(I_{\textrm{max}};\dot{H}_x^{s_c-\delta}(\mathbb{R}^3))}\leq E,$$
then $I_{\textrm{max}}=\mathbb{R}$ and $$\lVert u\rVert_{L_{t,x}^{15}(\mathbb{R}\times\mathbb{R}^3)}\leq C\exp(C(EM^\delta)^C).$$
\end{corollary}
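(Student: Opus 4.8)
The plan is to repeat the continuity argument from the proof of Theorem \ref{thm1} essentially verbatim, with the regularity space $\dot{H}_x^{s_c+1}$ replaced throughout by $\dot{H}_x^{s_c+\epsilon}$; only the single interpolation step that feeds Proposition \ref{prop-uniform} needs to be adjusted to account for the smaller regularity gap. First I would observe that since the nonlinearity $|u|^6u=u^4\overline{u}^3$ is algebraic, persistence of regularity in $\dot{H}_x^{s_c+\epsilon}$ holds on each compact subinterval of $I_{\textrm{max}}$ by the same Strichartz/local theory used for $\dot{H}_x^{s_c}$ (using $u_0\in\dot{H}_x^{s_c+\epsilon}$), so that the natural analogue
\begin{align*}
S(u,J)&=\lVert u\rVert_{L_t^\infty(J;\dot{H}_x^{s_c}(\mathbb{R}^3))}+\lVert u\rVert_{L_t^\infty(J;\dot{H}_x^{s_c+\epsilon}(\mathbb{R}^3))}+\lVert u\rVert_{L_{t,x}^{15}(J\times\mathbb{R}^3)}\\
&\hspace{0.4in}+\lVert |\nabla|^{s_c}u\rVert_{L_{t,x}^{10/3}(J\times\mathbb{R}^3)}+\lVert |\nabla|^{s_c+\epsilon}u\rVert_{L_{t,x}^{10/3}(J\times\mathbb{R}^3)}
\end{align*}
of the quantity from Theorem \ref{thm1} is finite for $J\subset\subset I_{\textrm{max}}$, and the fractional Leibniz (Kato--Ponce) rule for $|\nabla|^{s_c+\epsilon}$---which requires no smoothness beyond the algebraic structure of the nonlinearity---gives, on any $J=[t_1,t_2]$, the Strichartz bound
\begin{align*}
S(u,J)\lesssim \lVert u(t_1)\rVert_{\dot{H}_x^{s_c}\cap\dot{H}_x^{s_c+\epsilon}}+S(u,J)\lVert u\rVert_{L_{t,x}^{15}(J\times\mathbb{R}^3)}^6,
\end{align*}
in exact parallel with (\ref{eq-str-01}).

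The one genuinely new ingredient is the interpolation that produces the $\dot{H}_x^{s_c}$ control needed by Proposition \ref{prop-uniform}. Writing $\theta=\delta/\epsilon$ and noting $s_c=(1-\theta)(s_c-\delta)+\theta(s_c+\epsilon-\delta)$, one has, on the bootstrap region $S(u,T')\leq 2R_0$,
\begin{align*}
\lVert u(t)\rVert_{\dot{H}_x^{s_c}}\leq \lVert u(t)\rVert_{\dot{H}_x^{s_c-\delta}}^{1-\theta}\lVert u(t)\rVert_{\dot{H}_x^{s_c+\epsilon-\delta}}^{\theta}\leq E^{1-\theta}(2R_0)^{\theta},
\end{align*}
where the last step uses that $\dot{H}_x^{s_c+\epsilon-\delta}$ lies between $\dot{H}_x^{s_c}$ and $\dot{H}_x^{s_c+\epsilon}$ (valid for $0<\delta<\epsilon$), both controlled by $S(u,T')$. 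Since $\theta=\delta/\epsilon\to0$ as $\delta\to0$ with $\epsilon$ fixed, once $R_0$ has been selected we may shrink $\delta_0=\delta_0(M,\epsilon)$ so that $\delta_0<\epsilon$ and $(2R_0)^{\theta}\leq2$ whenever $0<\delta<\delta_0$; then $\lVert u\rVert_{L_t^\infty([0,T'];\dot{H}_x^{s_c})}\lesssim E$, and Proposition \ref{prop-uniform} yields $\lVert u\rVert_{L_{t,x}^{15}([0,T']\times\mathbb{R}^3)}\leq C\exp(C(EM^{\delta})^C)$, exactly as in the derivation of (\ref{2-eq-unif-1}).

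From here the argument is identical to Theorem \ref{thm1}: partition $[0,T]$ into subintervals on which $\lVert u\rVert_{L_{t,x}^{15}}^6$ equals a small fixed constant $\epsilon'\leq(4\tilde{C})^{-1}$, iterate the above Strichartz bound to get $S(u,I_j)\leq(2\tilde{C})^j\lVert u_0\rVert_{\dot{H}_x^{s_c}\cap\dot{H}_x^{s_c+\epsilon}}$, use the scattering bound just obtained to control the number $m$ of such subintervals, and choose $R_0$ larger than the resulting a priori bound $4C'(2\tilde{C})^m M$ on $S(u,T')$. The continuity argument then closes and forces $I_{\textrm{max}}=\mathbb{R}$ together with the stated global estimate.

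The step I expect to be the main obstacle---purely organizational, as in Theorem \ref{thm1}---is the order of quantifiers: $R_0$ must be fixed \emph{after} $\theta$ is under control, but the smallness of $\delta_0$ forcing $(2R_0)^{\theta}\leq2$ can only be imposed once $R_0$ is fixed. This is harmless precisely because $\theta=\delta/\epsilon$ depends on $\delta$ and $\epsilon$ alone and not on $R_0$, so $\delta_0$ may be shrunk at the very end without re-triggering the choice of $R_0$; every other piece---the Sobolev embeddings, the inhomogeneous Strichartz estimates, the fractional product rules, the counting of subintervals, and the geometric iteration of $S(u,I_j)$---transfers word-for-word from the $\dot{H}_x^{s_c+1}$ argument.
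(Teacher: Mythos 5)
Your proposal is correct and matches the paper's intended argument: the paper proves this corollary only by the remark that ``a close examination of the proof'' of Theorem \ref{thm1} suffices, and your write-up carries out exactly that examination, correctly isolating the single change needed, namely replacing the interpolation exponent $\delta$ by $\theta=\delta/\epsilon$ via $s_c=(1-\theta)(s_c-\delta)+\theta(s_c+\epsilon-\delta)$ and shrinking $\delta_0$ (now depending on $\epsilon$ as well as $M$) at the end so that $(2R_0)^{\theta}\leq 2$. The remaining steps (persistence of $\dot{H}_x^{s_c+\epsilon}$ regularity, the Strichartz/Leibniz bound for $S(u,J)$, the subinterval counting, and the closing of the continuity argument) transfer verbatim, as you note.
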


\noindent We remark that elementary scaling considerations show that the restriction $\epsilon>0$ is essential, at least for a statement in this form.

\section{Proof of Corollary $\ref{cor1}$: allowing for growth}

In this section we complete the proof of Corollary $\ref{cor1}$.

\begin{proof}[Proof of Corollary $\ref{cor1}$]
For each interval $J\subset I_{\textrm{max}}$, set $$T(u,J)=\lVert u\rVert_{L_t^\infty(J;\dot{H}_x^{s_c}(\mathbb{R}^3))}+\lVert u\rVert_{L_{t,x}^{15}(J\times\mathbb{R}^3)}+\lVert |\nabla|^{s_c}u\rVert_{L_{t,x}^{10/3}(J\times\mathbb{R}^3)},$$ and for ease of notation set $T(u,t_+)=T(u,[0,t_+])$ for each $t_+>0$.  Fix a parameter $M_0>0$ to be determined later in the argument.  As in the proof of Theorem $\ref{thm1}$, we use a continuity argument to show that if $M_0$ is sufficiently large then $T(u,t_+)\leq M_0$ for all $t_+$.

Suppose that $t_+>0$ is such that $T(u,t_+)\leq M_0$.  By the local theory, we can find $t'>t_+$ so that $T(u,t')\leq 2M_0$.  This, combined with our hypothesis on the $L_t^\infty\dot{H}_x^{s_c}$ norm, gives $$\lVert u\rVert_{L_t^\infty([0,t'];\dot{H}_x^{s_c}(\mathbb{R}^3))}\leq g(2M_0),$$
and thus, as a consequence of Proposition $\ref{prop-uniform}$, 
\begin{align}
\lVert u\rVert_{L_{t,x}^{15}([0,t']\times\mathbb{R}^3)}\leq C\exp(C[g(2M_0)]^C).\label{eq-unif-1}
\end{align}

Now, by the Sobolev and Strichartz estimates, one has, for any $J=[t_1,t_2]\subset I$,
\begin{align}
\nonumber T(u,J)&\lesssim \lVert u(t_1)\rVert_{\dot{H}_x^{s_c}}+\lVert |u|^6u\rVert_{N(I)}\\
\nonumber &\lesssim \lVert u(t_1)\rVert_{\dot{H}_x^{s_c}}+\lVert |\nabla|^{s_c}u\rVert_{L_{t,x}^{10/3}(I\times\mathbb{R}^3)}\lVert u\rVert_{L_{t,x}^{15}(I\times\mathbb{R}^3)}^6\\
&\lesssim \lVert u(t_1)\rVert_{\dot{H}_x^{s_c}}+T(u,J)\lVert u\rVert_{L_{t,x}^{15}(I\times\mathbb{R}^3)}^6\label{eq-str-1}
\end{align}

Let $\tilde{C}$ denote the implicit constant in the inequality (\ref{eq-str-1}).  Fix $\epsilon>0$ and partition the interval $[0,t']$ into intervals $I_1,I_2,\cdots,I_m$ with $$\lVert u\rVert_{L_{t,x}^{15}(I_j\times\mathbb{R}^3)}^6=\epsilon,\quad 1\leq j<m,$$ and $$\lVert u\rVert_{L_{t,x}^{15}(I_m\times\mathbb{R}^3)}^6<\epsilon.$$  Note that (\ref{eq-unif-1}) implies 
\begin{align}
m\leq C\exp(C[g(2M_0)]^C)/\epsilon\leq (C/\epsilon)\log^{1/2}(2M_0).\label{eq-m}
\end{align}
Applying (\ref{eq-str-1}) to each of the intervals $I_j=[t_j,t_{j+1}]$, $1\leq j\leq m$ now gives $T(u,I_1)\leq 2\tilde{C}\lVert u(0)\rVert_{\dot{H}_x^{s_c}}$ and
\begin{align*}
T(u,I_j)\leq 2\tilde{C}\lVert u(t_j)\rVert_{\dot{H}_x^{s_c}}\leq 2\tilde{C}\lVert u\rVert_{L_t^\infty(I_{j-1};\dot{H}_x^{s_c})}\leq 2\tilde{C}T(u,I_{j-1})
\end{align*}
for all $1<j\leq m$, provided $\epsilon\leq (2\tilde{C})^{-1}$ (in what follows, we make the choice $\epsilon=(2\tilde{C})^{-1}$).  We therefore get $T(u,I_j)\leq (2\tilde{C})^{j-1}T(u,I_1)$ for $1<j\leq m$, which in turn (combined with the above) gives $$T(u,I_j)\leq (2\tilde{C})^{j}\lVert u(0)\rVert_{\dot{H}_x^{s_c}}$$ for all $1\leq j\leq m$.

Putting this together, we get
\begin{align*}
T(u,t')&\leq (2\tilde{C})^m\lVert u_0\rVert_{\dot{H}_x^{s_c}}+\left(\sum_{j=1}^m \lVert u\rVert_{L_{t,x}^{15}(I_j\times\mathbb{R}^3)}^{15}\right)^{1/15}\\
&\hspace{1.2in}+\left(\sum_{j=1}^m \lVert |\nabla|^{s_c}u\rVert_{L_{t,x}^{10/3}(I_j\times\mathbb{R}^3)}^{10/3}\right)^{3/10}\\
&\leq 3C'(2\tilde{C})^m\lVert u(0)\rVert_{\dot{H}_x^{s_c}}
\end{align*}
for some $C'=C'(C)>0$.

To conclude, recall (\ref{eq-m}) and note that choosing $M_0$ large enough to ensure $(C/\epsilon)\log^{1/2}(2M_0)=2C\tilde{C}\log^{1/2}(2M_0)\leq \log(M_0/(3C'\lVert u(0)\rVert_{\dot{H}_x^{s_c}}))/\log(2C)$ therefore gives
\begin{align*}
3C'(2\tilde{C})^m\lVert u(0)\rVert_{\dot{H}_x^{s_c}}\leq M_0,
\end{align*}
so that $T(u,t')\leq M_0$ (to see that this choice of $M_0$ is possible, note that for every $\lambda>0$, $\log^{1/2}(t)/\log(t/\lambda)\rightarrow 0$ as $t\rightarrow\infty$).

Since $t_+>0$ was an arbitrary value for which $T(u,t_+)\leq M_0$, this establishes the desired uniform bound.
\end{proof}

\end{document}